\pgfplotsset{width=7cm}
\theoremstyle{plain}
\newtheorem{theorem}{Theorem}
\newtheorem{proposition}{Proposition}
\newtheorem{lemma}{Lemma}
\newtheorem{corollary}{Corollary}
\newtheorem{conjecture}{Conjecture}
\theoremstyle{definition}
\newtheorem{definition}{Definition}
\newtheorem{example}{Example}
\newtheorem{algorithm}{Algorithm}
\newtheorem*{ack}{Acknowledgement}
\newtheorem{remark}{Remark}
\def\G{\mathcal{G}}
\def\H{\mathcal{H}}
\def\P{\mathcal{P}}
\def\N{\mathcal{N}}
\def\A{\mathcal{A}}
\def\B{\mathcal{B}}
\def\S{\mathcal{S}}
\def\mex{\operatorname{mex}}
\begin{document}

\title{The Sprague-Grundy function for some nearly disjunctive sums of Nim and Silver Dollar games}

\author{Graham Farr}
\address{Faculty of IT, Monash University, Clayton, Victoria 3800, Australia}
\email{graham.farr@monash.edu}

\author{Nhan Bao Ho}
\address{Department of Mathematics and Statistics, La Trobe University, Bundoora, Victoria 3086, Australia, Australia}
\email{nhan.ho@latrobe.edu.au, nhanbaoho@gmail.com}


\subjclass[2000]{91A46}
\keywords{combinatorial games, disjunctive sum, Sprague-Grundy function, Nim, Star Nim, Silver Dollar, Star Silver Dollar, ultimately periodic.}

\thanks{Nhan Bao Ho was a 2014 Endeavour Research Fellowship recipient.  He also
thanks Monash University for hosting the fellowship.}

\begin{abstract}
We introduce and analyse an extension of the disjunctive sum operation on some classical impartial games.  Whereas the
disjunctive sum describes positions formed from independent subpositions, our operation combines positions that are not
completely independent but interact only in a very restricted way.  We extend the games Nim and Silver Dollar, played by
moving counters along one-dimensional strips of cells, by joining several strips at their initial cell.  We prove that, in certain cases, computing the Sprague-Grundy function can be simplified to that of a simpler game with at most two tokens in each strip.  We give an algorithm that, for each Sprague-Grundy value $g$, computes the positions of two-token Star Nim whose Sprague-Grundy values are $g$.  We establish that the sequence of differences of entries of these positions is ultimately additively periodic.
\end{abstract}

\maketitle

\section{Introduction}   \label{Ss.intro}

In the theory of combinatorial games, the \textit{disjunctive sum} plays a central role.  This dates back to Bouton's
pioneering work on {\sc Nim}, which became the archetypal impartial game \cite{Bouton}. A disjunctive sum represents a game
that can be broken down into independent smaller games. Such a game can be efficiently analysed in a  divide-and-conquer
fashion; specifically, its Sprague-Grundy value may be obtained from the Sprague-Grundy values of its component games using
{\sc Nim}-addition (i.e., bitwise mod-2 addition). We briefly review the basics of combinatorial game theory in Section
\ref{Ss.SG}.

In many games, disjunctive sums arise commonly in actual play: a position that was initially large and complex may develop
into a set of totally separate self-contained  sub-positions. Indeed, analysis of this phenomenon in the partizan game {\sc
Go} was the initial spark that led Berlekamp, Conway and Guy to develop the theory of surreal numbers and combinatorial games
\cite[Prologue]{con}.  But it is probably even more common, in practice, for a game position to develop sub-positions that
are mostly, but not entirely, self-contained, so that there is still some small amount of interaction between them.

It is natural, then, to investigate impartial games obtained by combining smaller games that are ``nearly'' independent, but
not completely so.  This line of enquiry has analogies  in many areas of mathematics, for example in connectivity in graphs,
where small separating sets of vertices or edges in connected graphs enable efficient divide-and-conquer steps  in many
algorithms.

In this paper, we study one of the simplest possible ways of combining impartial games in a ``nearly independent'' way.  We
focus on a family of games that generalize {\sc Nim}. These are based on moving tokens along one-dimensional strips of
squares, where squares are numbered by nonnegative integers. A single {\sc Nim}-heap is represented by a single strip with a
single token.  The index of  the token represents the size of the heap.  The token may be moved to any lower-numbered square.
A general {\sc Nim} position is obtained by taking a disjunctive sum of {\sc  Nim}-heaps, in which all the strips are
disjoint.

We introduce {\sc Star Nim}, in which we take some number $m$ of strips and identify the 0-squares of each strip.  We can
picture the $m$ strips radiating out from their shared  square at position 0. The token on each strip may be in any position,
including 0, but we forbid a square from containing more than one token.  So, as soon as a token is moved  down some strip to
the 0-square, all the other tokens on the other strips are forever prevented from going there. We also look at Star versions
of the {\sc Silver Dollar} game.

\subsection{The Sprague-Grundy theory for the impartial games} \label{Ss.SG}
In a two-player combinatorial game, the players move alternately, following some set of rules for moves. There is no hidden
information and no element of luck. All games discussed in this paper are short and impartial. A {\it short} game has a
finite number of positions and each position can be visited once (no loop) and so the game terminates after a finite number
of moves. A game is {\it impartial} if the two players have the same options for moves from every position. The player who
makes the last move wins (normal convention). More comprehensive theory can be found in \cite{ww1, con}.


A position is an {\it $\N$-position} if the next player (the player about to move) can have a plan of moves to win and a {\it
$\P$-position} otherwise \cite{ww1}. The {\it terminal} position is the position without a legal move, and so is a
$\P$-position.


Introduced by Grundy \cite{Gru39} and Sprague \cite{Spr36, Spr37}, the {\it Sprague-Grundy value} $\G(x)$ of a position $x$
is defined recursively as follows: the terminal position has value $0$, and $\G(x) = n$ if for every $m$ such that $0 \leq m
< n$, one can move from $x$ to some $y$ such that $\G(y) = m$ and there is no move from $x$ to $z$ such that $\G(z) = n$.
Note that a position is a $\P$-position if and only if its Sprague-Grundy value is zero. The following lemma follows from the
definition of Sprague-Grundy values.

\begin{lemma} \label{g-positions} \cite{Ext}
For each $g$, the set $S_g$ of positions whose Sprague-Grundy value is $g$ satisfies the following conditions:
\begin{enumerate} \itemsep0em
\item [\rm{(1)}] there is no move between two distinct positions in $S_g$, and
\item [\rm{(2)}] from any position $p$ not in $\cup_{i=0}^gS_i$, there exists a move that terminates in $S_g$.
\end{enumerate}
\end{lemma}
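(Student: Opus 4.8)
The plan is to derive both assertions directly from the recursive definition of the Sprague-Grundy value, which I will use in its two constituent parts: for a position $x$ with $\G(x) = n$, (i) no option of $x$ has value $n$, and (ii) for every $m$ with $0 \le m < n$, some option of $x$ has value exactly $m$. Before starting I would note that these values partition the positions: every position has exactly one Sprague-Grundy value, so the sets $S_0, S_1, S_2, \dots$ are pairwise disjoint and their union is the whole game. In particular, a position $p$ lies outside $\cup_{i=0}^g S_i$ precisely when $\G(p) > g$.

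For part (1), I would suppose for contradiction that there is a move from $x$ to a distinct position $y$ with $x, y \in S_g$, so $\G(x) = \G(y) = g$. Since $\G(x) = g$, part (i) of the definition says no option of $x$ can have Sprague-Grundy value $g$; but $y$ is an option of $x$ with $\G(y) = g$, a contradiction. Hence no move joins two distinct positions of $S_g$ (the same argument with $x$ and $y$ interchanged rules out a move in either direction).

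For part (2), let $p \notin \cup_{i=0}^g S_i$, so that $\G(p) > g$ by the observation above. Writing $n = \G(p)$, we have $0 \le g < n$, so part (ii) of the definition guarantees a move from $p$ to some position $y$ with $\G(y) = g$, that is, to some $y \in S_g$. This is exactly the required move terminating in $S_g$.

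The argument involves no real obstacle: both parts are essentially immediate restatements of the $\mex$ characterisation of $\G$. The only point needing any care is the translation in part (2) of the hypothesis $p \notin \cup_{i=0}^g S_i$ into the inequality $\G(p) > g$, which relies on the fact that the sets $S_i$ are disjoint and exhaustive; once that is in place, the defining properties of the Sprague-Grundy value do all the work.
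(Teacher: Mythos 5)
Your proof is correct and follows exactly the route the paper intends: the paper offers no separate argument, simply noting that the lemma ``follows from the definition of Sprague-Grundy values,'' and your two-part derivation (no option shares the value of its parent; every smaller value is achieved by some option) together with the observation that the $S_i$ partition the positions is precisely that argument spelled out.
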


The Sprague-Grundy function $\G$ plays an important role in the study of the {\it disjunctive sum} of games defined as
follows. Given two games $G$ and $H$, the two players alternately move, choosing either of the two games and moving in that
game. The play ends when there is no move available from either of the games. In this paper, a sum means disjunctive sum. The
following theorem give us a winning strategy for playing sums, and uses bitwise mod-2 addition, denoted by $\oplus$, of
binary representations of numbers.

\begin{theorem} [\cite{Gru39, Spr36, Spr37}] \label{Thm.SG}
The Sprague-Grundy value of the sum of two games $G$ and $H$ is the {\sc Nim}-sum  $\G(G) \oplus \G(H)$.
\end{theorem}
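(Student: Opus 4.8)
The plan is to prove this by \emph{strong induction} on the structure of the sum $G+H$, taking as inductive hypothesis that the formula holds for every position reachable from $G+H$ in a single move. Write $a = \G(G)$ and $b = \G(H)$; the goal is to show $\G(G+H) = a \oplus b$. The key observation is that every option of $G+H$ is obtained either by moving in $G$ (reaching some $G' + H$, where $G'$ is an option of $G$) or by moving in $H$ (reaching some $G + H'$). By the induction hypothesis these options have Sprague-Grundy values $\G(G') \oplus b$ and $a \oplus \G(H')$ respectively. Since $\G$ is defined as the $\mex$ of the values of the options, it suffices to verify that $a \oplus b$ is precisely the least non-negative integer missing from the set of these values.

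This splits into two claims. First, no option of $G+H$ has value exactly $a \oplus b$. Suppose a move in $G$ produced value $\G(G') \oplus b = a \oplus b$. Because $\oplus$ (bitwise mod-$2$ addition) is associative and commutative and satisfies $x \oplus x = 0$ for every $x$, we may cancel $b$ to obtain $\G(G') = a = \G(G)$. But by the definition of $\G(G)$ as a $\mex$, no option $G'$ of $G$ can satisfy $\G(G') = \G(G)$, a contradiction; the symmetric argument handles moves in $H$. This direction is routine.

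The second and harder claim is that every $c$ with $0 \le c < a \oplus b$ occurs as the value of some option. This is where the real content lies. Given such a $c$, set $d = a \oplus b \oplus c \neq 0$ and let $k$ be the position of its highest set bit. Since $c < a \oplus b$, the numbers $c$ and $a \oplus b$ first differ, reading from the most significant bit downward, at position $k$, where $a \oplus b$ has a $1$ and $c$ has a $0$; hence bit $k$ is set in exactly one of $a$ and $b$. Assuming without loss of generality that it is set in $a$, I would form $a' = a \oplus d$ and check that XORing with $d$ only affects bits at or below $k$ and flips bit $k$ of $a$ from $1$ to $0$, so that $a' < a$. Because $\G(G) = a$ is a $\mex$, every value below $a$—in particular $a'$—is realised by some option $G'$ of $G$; moving to $G' + H$ then yields value $a' \oplus b = (a \oplus d) \oplus b = c$ after a short $\oplus$-cancellation.

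The main obstacle is the bit-level argument of the last paragraph: one must correctly identify which component carries the relevant high bit, form the right target in that component, and confirm that the target is strictly smaller than the component's own value, so that the $\mex$ property guarantees an option realising it. The base case, in which both $G$ and $H$ are terminal, is immediate, since then all three values are $0$.
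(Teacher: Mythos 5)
Your argument is correct: it is the standard proof of the Sprague--Grundy theorem (induction on options, showing $a\oplus b$ is the $\mex$ of the option values via the highest-bit argument for the ``every $c<a\oplus b$ is realised'' direction), and the bit-level step identifying which summand carries bit $k$ and producing $a'=a\oplus d<a$ is handled properly. The paper itself gives no proof of this statement --- it cites it as a classical result of Grundy and Sprague --- so there is nothing to compare against beyond noting that yours is the textbook argument found in the cited sources.
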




The game of {\sc Nim}, analyzed by  Bouton \cite{Bouton}, provides a typical example of sums. This game is played with a
finite number of piles of tokens. A move consists of choosing one pile and removing an arbitrary number of tokens from that
pile. A game with $n$ piles $a_1, a_2, \ldots, a_n$ is denoted by {\sc Nim}$(a_1, a_2, \ldots, a_n)$. {\sc Nim} is a sum of
multiple one-pile {\sc Nim}$(a_i)$. The Sprague-Grundy function of {\sc Nim} is the {\sc Nim}-sum of the sizes of the single
piles. The following lemma follows from Theorem \ref{Thm.SG}.

\begin{lemma} \label{SG.Nim} \cite{Bouton}
The Sprague-Grundy value of {\sc Nim}$(a_1, a_2, \ldots, a_n)$ is $\bigoplus_{i=1}^n a_i$.
\end{lemma}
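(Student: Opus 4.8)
The plan is to reduce the statement to the single-pile case and then apply Theorem \ref{Thm.SG}. By definition, {\sc Nim}$(a_1, a_2, \ldots, a_n)$ is the disjunctive sum of the one-pile games {\sc Nim}$(a_1), {\sc Nim}(a_2), \ldots, {\sc Nim}(a_n)$, since a move consists of choosing a single pile and playing in it while the others are untouched. So it suffices to compute the Sprague-Grundy value of a single pile and then combine these values via the sum theorem.

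First I would show, by induction on $a$, that $\G(\textsc{Nim}(a)) = a$. The base case is $a = 0$: the empty pile is the terminal position, which has Sprague-Grundy value $0$. For $a > 0$, the legal moves from {\sc Nim}$(a)$ are precisely the moves to {\sc Nim}$(b)$ for each $b$ with $0 \le b < a$ (remove $a - b$ tokens). By the induction hypothesis these options carry the Sprague-Grundy values $0, 1, \ldots, a-1$, and no option has value $a$. Hence, directly from the recursive definition of the Sprague-Grundy value as the least nonnegative integer not occurring among the options,
\[
\G(\textsc{Nim}(a)) = \mex\{\,0, 1, \ldots, a-1\,\} = a.
\]

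Finally I would assemble the pieces using Theorem \ref{Thm.SG}. That theorem is stated for a sum of two games, so to handle all $n$ piles I would induct on $n$: write {\sc Nim}$(a_1, \ldots, a_n)$ as the disjunctive sum of {\sc Nim}$(a_1, \ldots, a_{n-1})$ and the single pile {\sc Nim}$(a_n)$, apply the theorem, and invoke the induction hypothesis together with the single-pile computation above. Using the associativity of the bitwise mod-2 addition $\oplus$, this yields
\[
\G(\textsc{Nim}(a_1, \ldots, a_n)) = \left(\bigoplus_{i=1}^{n-1} a_i\right) \oplus a_n = \bigoplus_{i=1}^n a_i.
\]

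The only genuinely substantive step is the single-pile induction, and even that is routine once one observes that every strictly smaller pile is reachable in a single move, so that the set of option values is exactly $\{0, 1, \ldots, a-1\}$. The multi-pile conclusion is then a purely formal consequence of Theorem \ref{Thm.SG} and the associativity of $\oplus$, so I do not anticipate any real obstacle; the content has effectively been front-loaded into the general sum theorem.
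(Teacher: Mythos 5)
Your proof is correct and follows essentially the same route the paper intends: the paper simply observes that {\sc Nim}$(a_1,\ldots,a_n)$ is the disjunctive sum of the one-pile games and that the lemma then follows from Theorem \ref{Thm.SG}. You have merely filled in the routine details (the $\mex$ computation showing $\G(\textsc{Nim}(a))=a$ and the induction on $n$ with associativity of $\oplus$), which is exactly what the paper leaves implicit.
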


\subsection{{\sc Star Silver Dollar}} \label{Ss.SSD}

The game of  {\sc Silver Dollar} \cite{con} is played with a finite strip of squares labeled from the left end by $0,1,2,
\ldots$ with at most one token on each square. The two players alternately move, choosing one token and moving it to an empty
square that is labeled with a smaller number without jumping over any of the other tokens. The game ends when there is no
move available and the player who makes the last move wins. 


For example, in Figure \ref{SD.position}, the allowed moves are: move the token on square 2 to square 1 or 0; or move 5 to 4
or 3; or move 8 to 7; or move 10 to 9. In this position, the token on square 6 cannot move.


\begin{figure}[ht]
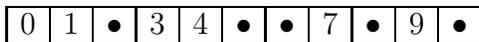

\begin{center}
\begin{tabular}{|c|c|c|c|c|c|c|c|c|c|c|}
\hline
0&1&$\bullet$&3 &4 &$\bullet$ &$\bullet$ &7 &$\bullet$ &9 &$\bullet$\\
\hline
\end{tabular}
\caption{A position in {\sc Silver Dollar} with bullets ``$\bullet$" representing tokens.}\label{SD.position}
\end{center}
\end{figure}

An analysis of Sprague-Grundy values for {\sc Silver Dollar} will be given in section \ref{S.SD}.

We generalize the game of {\sc Silver Dollar} to multiple strips. Given $m$ {\sc Silver Dollar}s, we overlap the zero squares
of these games, as illustrated in Figure \ref{abs}. A move consists of choosing an arbitrary token and moving it to some
smaller empty square on the same strip without jumping over any token. The next player to move can choose a different strip.
We call this game {\sc Star Silver Dollar}.



\begin{figure}[ht]
\begin{center}

\def\a{0.3}   

\def\TeeThree#1{\def\b{3}
\begin{scope}[rotate=#1]
    \draw [thick](0,\a) -- (\b/1.5+\a/4,\a) -- (\b/1.5+\a/3,-\a) -- (0,-\a);
   {\foreach \i in {1,2,3} \draw [thick] (\i/1.5+\a/4,\a) -- (\i/1.5+\a/4,-\a);}
    \draw [thick](0,0) circle (\a);
   {\foreach \i in {1}  \node at (\i/1.6-\a/4,0) {$\i$};}
   {\foreach \i in {2}  \node at (\i/1.65-\a/4,0) {$\i$};}
   {\foreach \i in {3}  \node at (\i/1.65-\a/4,0) {$\bullet$};}
  \end{scope}
}
\def\TeeFour#1{\def\b{4}
\begin{scope}[rotate=#1]
    \draw [thick](0,\a) -- (\b/1.5+\a/4,\a) -- (\b/1.5+\a/4,-\a) -- (0,-\a);
   {\foreach \i in {1,2,...,\b} \draw [thick] (\i/1.5+\a/4,\a) -- (\i/1.5+\a/4,-\a);}
    \draw [thick](0,0) circle (\a);
   {\foreach \i in {2}  \node at (\i/1.65-\a/4,0) {$\i$};}
   {\foreach \i in {3}  \node at (\i/1.63-\a/4,0) {$\i$};}
   {\foreach \i in {1}  \node at (\i/1.55-\a/4,0) {$\bullet$};}
   {\foreach \i in {4}  \node at (\i/1.6-\a/4,0) {$\bullet$};}
\end{scope}
}
\def\TeeFive#1{\def\b{5}
\begin{scope}[rotate=#1]
  \draw [thick](0,\a) -- (\b/1.5+\a/4,\a) -- (\b/1.5+\a/4,-\a) -- (0,-\a);
  {\foreach \i in {1,2,...,\b} \draw [thick] (\i/1.5+\a/4,\a) -- (\i/1.5+\a/4,-\a);}
  \draw [thick](0,0) circle (\a);
  {\foreach \i in {1}  \node at (\i/1.5-\a/2,0) {$\i$};}
  {\foreach \i in {3,4}  \node at (\i/1.6-\a/2,0) {$\i$};}
  {\foreach \i in {2}  \node at (\i/1.6-\a/2,0) {$\bullet$};}
  {\foreach \i in {5}  \node at (\i/1.55-\a/2,0) {$\bullet$};}
\end{scope}
}
\begin{tikzpicture}[scale = .8, font=\small]
\node at (0,0){$0$};
\TeeFive{0}
\TeeThree{120}
\TeeFour{240}
\end{tikzpicture}
\caption{The {\sc Star Silver Dollar} position ([2,5], [3],[1,4]).}\label{abs}
\end{center}
\end{figure}


\begin{remark} \label{R.0-SSD}
We can assume that our {\sc Star Silver Dollar} does not have a token on square 0. If there is token on this square, the game
is the disjunctive sum of separate {\sc Silver Dollar}s and it can be solved as shown in Remark \ref{R.0-SD}. \qed
\end{remark}


We use $[a_1, a_2, \ldots, a_k]$ to represent a {\sc Silver Dollar} with tokens in the squares $a_1, a_2, \ldots, a_k$ with
$a_1 < a_2 < \cdots < a_k$. For example, the position in Figure \ref{SD.position} is represented by $[2,5,6,8,10]$. The {\sc
Star Silver Dollar} formed by two {\sc Silver Dollar}s is represented by $([a_1, a_2, \ldots, a_k], [b_1, b_2, \ldots, b_l])$
and so on.  We use {\sc $m$-Star Silver Dollar} when emphasizing that the game has $m$ strips.


{\sc Star Silver Dollar} is called {\sc Star Nim} if each  strip has only one token. The game {\sc $m$-Star Nim} is {\sc Star
Nim} with $m$ strips.


Several games played on strips have been studied, including {\sc Hexad} \cite{CS.86}, {\sc Welter} \cite{Welter1, Welter2}
and its variants \cite{Ho, k-Welter1}. One may be interested in studying the combination of these games using the idea of
{\sc Star Silver Dollar}.


We next recall the Sprague-Grundy function for {\sc Silver Dollar} for the reader's convenience.

\subsection{An analysis of {\sc Silver Dollar}} \label{S.SD}
The Spague-Grundy function for {\sc Silver Dollar} is analyzed in \cite[Chapter 11]{con} as follows. For each {\sc Silver
Dollar} position $x = [x_1, x_2, \ldots, x_n]$,
\begin{align*}
\G(x) =
\begin{cases}
\bigoplus_{i = 1}^{n/2}(x_{2i}-x_{2i-1}-1),                                        & \text{ if $n$ is even};  \\
x_1 \oplus \bigoplus_{i = 1}^{{(n-1)}/2} (x_{2i+1}-x_{2i}-1),       & \text{ otherwise}.
\end{cases}
\end{align*}
For example, the Spague-Grundy value for the {\sc Silver Dollar} position $[3,5,8,12,19]$ is $3 \oplus (8-5-1) \oplus
(19-12-1) =  7$. Note that $x_{j+1} - x_j- 1$ is the number of empty squares between the two squares $x_j$ and $x_{j+1}$.

Note that {\sc Silver Dollar} can be considered as the game of {\sc Nim} in which each term in the formula for Sprague-Grundy
function above is a pile in {\sc Nim}. The only difference with {\sc Nim} is that {\sc Silver Dollar} sometimes increases one
pile when reducing another pile. This is because moving a token $x_i$ reduces the gap between that token and token $x_{i-1}$
on its left but increases the gap between $x_i$ and the token $x_{i+1}$ on its right. However, this increase does not repeat
forever and also does not affect the winning strategy.


\begin{remark} \label{R.0-SD}
It can be seen that {\sc Star Silver Dollar} with token in square 0 is the sum of separate {\sc Silver Dollar} and so it is
solved by using the {\sc Nim}-sum of Sprague-Grundy values and the analysis of {\sc Silver Dollar} given just above.
\qed \end{remark}

For example, the {\sc Star Silver Dollar} in Figure \ref{SSD-0} is exactly the sum of three separate {\sc Silver Dollar}:
$[1,4]$, $[2]$, and $[2]$.

\begin{figure}[ht]
\begin{center}

\def\a{0.3}   

\def\TeeThree#1{\def\b{3}
\begin{scope}[rotate=#1]
    \draw [thick](0,\a) -- (\b/1.5+\a/4,\a) -- (\b/1.5+\a/3,-\a) -- (0,-\a);
   {\foreach \i in {1,2,3} \draw [thick] (\i/1.5+\a/4,\a) -- (\i/1.5+\a/4,-\a);}
    \draw [thick](0,0) circle (\a);
   {\foreach \i in {1}  \node at (\i/1.6-\a/4,0) {$\i$};}
   {\foreach \i in {2}  \node at (\i/1.65-\a/4,0) {$\i$};}
   {\foreach \i in {3}  \node at (\i/1.65-\a/4,0) {$\bullet$};}
  \end{scope}
}
\def\TeeFour#1{\def\b{4}
\begin{scope}[rotate=#1]
    \draw [thick](0,\a) -- (\b/1.5+\a/4,\a) -- (\b/1.5+\a/4,-\a) -- (0,-\a);
   {\foreach \i in {1,2,...,\b} \draw [thick] (\i/1.5+\a/4,\a) -- (\i/1.5+\a/4,-\a);}
    \draw [thick](0,0) circle (\a);
   {\foreach \i in {2}  \node at (\i/1.65-\a/4,0) {$\i$};}
   {\foreach \i in {3}  \node at (\i/1.63-\a/4,0) {$\i$};}
   {\foreach \i in {1}  \node at (\i/1.55-\a/4,0) {$\bullet$};}
   {\foreach \i in {4}  \node at (\i/1.6-\a/4,0) {$\bullet$};}
\end{scope}
}
\def\TeeFive#1{\def\b{5}
\begin{scope}[rotate=#1]
  \draw [thick](0,\a) -- (\b/1.5+\a/4,\a) -- (\b/1.5+\a/4,-\a) -- (0,-\a);
  {\foreach \i in {1,2,...,\b} \draw [thick] (\i/1.5+\a/4,\a) -- (\i/1.5+\a/4,-\a);}
  \draw [thick](0,0) circle (\a);
  {\foreach \i in {1}  \node at (\i/1.5-\a/2,0) {$\i$};}
  {\foreach \i in {3,4}  \node at (\i/1.6-\a/2,0) {$\i$};}
  {\foreach \i in {2}  \node at (\i/1.6-\a/2,0) {$\bullet$};}
  {\foreach \i in {5}  \node at (\i/1.55-\a/2,0) {$\bullet$};}
\end{scope}
}
\begin{tikzpicture}[scale = .8, font=\small]
\node at (0,0){$\bullet$};
\TeeFive{0}
\TeeThree{120}
\TeeFour{240}
\end{tikzpicture}
\caption{The {\sc Star Silver Dollar} with token on square zero.}\label{SSD-0}
\end{center}
\end{figure}

The outline of paper is as follows. In the next section, we show that the Sprague-Grundy function for {\sc Star Silver
Dollar} can be simplified to that for a simpler one where each strip has at most two tokens. In Section \ref{Ss.SN.P}, we
characterize the $\P$-positions for {\sc $m$-Star Nim} with $m \leq 4$. In Section \ref{Ss.SN.propety} we analyze the
Sprague-Grundy function of {\sc 2-Star Nim}. We give an algorithm that produces positions whose Sprague-Grundy values are
$g$.
We prove a periodicity property of the sequence of positions whose Sprague-Grundy values are $g$. We then prove an additive
periodicity property of the Sprague-Grundy function. In Section \ref{Ss.future}, we discuss some research direction for
further study. We also include some code written on Maple in Appendix \ref{A} for the reader's convenience.

\section{The Sprague-Grundy function for {\sc Star Silver Dollar}}   \label{Ss.SSD}

We show that {\sc Star Silver Dollar} can be described as a disjunctive sum of a simpler {\sc Star Silver Dollar} with at
most two tokens on each strip and separable {\sc Silver Dollar}s (Theorem \ref{SSD.MT}). We then examine the cases for the
simplified {\sc Star Silver Dollar}.
When each strip has an odd number of tokens, this simplified game is derived from {\sc Star Nim} (Corollary \ref{SSD.odd}).
When each strip has an even number of tokens, we establish a formula for the Sprague-Grundy function (Corollary
\ref{SSD.even}).


\begin{definition} \label{Head.Tail}
Given $m$ {\sc Silver Dollar}s $S^i$ with $1 \leq i \leq m$, for each $i$,
let $S^i_H$ be the {\sc Silver Dollar} containing exactly the smallest token (resp.~the two smallest tokens) of $S^i$ if
$S^i$ has an odd (resp. even) number of tokens, and
$S^i_T$ be the {\sc Silver Dollar} containing the remaining tokens of $S^i$. \qed
\end{definition}

In other words, if $S^i = [a^i_1,a^i_2,\ldots,a^i_{k_i}]$ then
$S^i_H = [a^i_1]$ and $S^i_T = [a^i_2,\ldots,a^i_{k_i}]$ if $k_i$ is odd and
$S^i_H = [a^i_1,a^i_2]$ and $S^i_T = [a^i_3,\ldots,a^i_{k_i}]$ otherwise.
Here $H$ stands for Head and $T$ stands for Tail.
Note that $S^i_H$ is not empty in the even case ($k_i$ is even).

\begin{example}
We give an example corresponding to the case where $S^i$ has an odd (resp.~even) number of tokens in Figure \ref{E.ST.odd}
(resp.~Figure \ref{E.ST.even}).
\end{example}

\begin{figure}[ht]
\quad \quad \quad \quad \quad \quad \quad $S^i \ $ =
\begin{tabular}{|c|c|c|c|c|c|c|c|c|c|c|}
\hline
0&1&$\bullet$&3 &4 &$\bullet$ &$\bullet$ &7 &$\bullet$ &9 &$\bullet$\\
\hline
\end{tabular}

\quad \quad \quad \quad \quad \quad \quad $S^i_H $ =
\begin{tabular}{|c|c|c|}
\hline
0&1&$\bullet$\\
\hline
\end{tabular}

\quad \quad \quad \quad \quad \quad \quad $S^i_T $ =
\begin{tabular}{|c|c|c|c|c|c|c|c|c|c|c|}
\hline
0&1&2&3 &4 &$\bullet$ &$\bullet$ &7 &$\bullet$ &9 &$\bullet$\\
\hline
\end{tabular}
\caption{An example when $S^i$ has an odd number of tokens.} \label{E.ST.odd}
\end{figure}

\begin{figure}[ht]
\quad \quad \quad \quad \quad \quad \quad $S^i \ $ =
\begin{tabular}{|c|c|c|c|c|c|c|c|c|}
\hline
0&1&$\bullet$&3 &4 &$\bullet$ &$\bullet$ &7 &$\bullet$\\
\hline
\end{tabular}

\quad \quad \quad \quad \quad \quad \quad $S^i_H $ =
\begin{tabular}{|c|c|c|c|c|c|}
\hline
0&1&$\bullet$&3 &4 &$\bullet$\\
\hline
\end{tabular}

\quad \quad \quad \quad \quad \quad \quad $S^i_T $ =
\begin{tabular}{|c|c|c|c|c|c|c|c|c|c|c|}
\hline
0&1&2&3 &4 &5 &$\bullet$ &7 &$\bullet$\\
\hline
\end{tabular}
\caption{An example when $S^i$ has an even number of tokens.}  \label{E.ST.even}
\end{figure}



\begin{theorem} \label{SSD.MT}
The game of {\sc Star Silver Dollar} $(S^1, S^2, \ldots, S^m)$ is exactly the disjunctive sum of new {\sc Star Silver Dollar}
$(S^1_H, S^2_H, \ldots, S^m_H)$ and new  {\sc Silver Dollar}s $S^i_T$. In other words,
\[ \G(S^1,S^2,\ldots,S^m) =
\underset{\text{new {\sc Star Silver Dollar}}}{\underbrace{\G(S^1_H, S^2_H, \ldots, S^m_H)}}
\oplus
\underset{\text{new {\sc Silver Dollar}s $S^i_T$}}
{\underbrace{
\G(S^1_T) \oplus \G(S^2_T) \oplus \cdots \oplus \G(S^m_T)
}}.\]
\end{theorem}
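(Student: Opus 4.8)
The plan is to prove the identity by verifying that the function $f$ given by the right-hand side satisfies the recursive definition of $\G$ for the original game, rather than by exhibiting a literal isomorphism of games (which fails, since a leftmost tail token has strictly more freedom once the strips are separated). Write $P=(S^1,\dots,S^m)$ for a position of {\sc Star Silver Dollar} and set
\[ f(P)=\G(S^1_H,\dots,S^m_H)\oplus\bigoplus_{i=1}^m\G(S^i_T), \]
which by Theorem \ref{Thm.SG} equals $\G(\phi(P))$, where $\phi(P)$ denotes the genuine disjunctive sum $T$ of the head {\sc Star Silver Dollar} and the separate tail {\sc Silver Dollar}s $S^i_T$. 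First I would record two structural facts that persist throughout play. Since no token may jump another, the relative order of tokens on each strip is preserved, so the head/tail split of Definition \ref{Head.Tail} is stable and the head tokens remain the leftmost ones. Moreover a tail token can never reach square $0$: its position is bounded below by (the top head token on its strip) $+\,1\ge 1$. Hence the Star constraint at square $0$ only ever restricts head tokens.

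Next I would set up a move correspondence. A move of a head token in $P$ is blocked only by the other head token on its strip or by the shared square $0$, exactly as in the head Star game inside $T$; a move of a tail token in $P$ is blocked below by its left neighbour, and for the leftmost tail token this bound is (head-top)$\,+\,1$, which is at least the bound $0$ used in the isolated tail $S^i_T$ of $T$. Thus every legal move $P\to P'$ is mirrored by a legal move $\phi(P)\to\phi(P')$ of $T$, the range of each leftmost tail token in $P$ being contained in its range in $T$. Since $T$ is a genuine game, Lemma \ref{g-positions}(1) applied to $T$ shows that no move preserves $\G(T)$, whence $f(P')=\G(\phi(P'))\neq\G(\phi(P))=f(P)$. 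This gives the first defining property: no legal move of $P$ preserves $f$.

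The remaining task is to reach every smaller value. By the {\sc Silver Dollar} formula of Section \ref{S.SD}, $f(P)$ is the {\sc Nim}-sum of the head value $h:=\G(S^1_H,\dots,S^m_H)$ and the tail gap-piles, each of the form (right token of a paired tail gap) $-$ (left token) $-\,1$. Given $v<f(P)$, the standard {\sc Nim} reduction selects a summand whose value carries the top bit of $f(P)\oplus v$ and decreases that summand, producing $f$-value $v$. The point is that such a decrease is always realizable legally in $P$: decreasing a tail gap-pile is effected by moving the \emph{right} token of that pair leftward, whose lower bound equals (its left partner) $+\,1$ and is therefore identical in $P$ and in $T$; and decreasing $h$ is effected by a move in the head Star game, which transfers verbatim to $P$. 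The only move whose range genuinely differs between $P$ and $T$ is shifting a leftmost tail token leftward, and that move \emph{increases} its gap-pile rather than decreasing it.

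I expect the main obstacle to be precisely this last point: justifying that the restricted mobility of each leftmost tail token, blocked by its head rather than by square $0$, never obstructs the second property. The resolution is that lowering a {\sc Nim}-sum always amounts to decreasing a single summand, and decreasing a {\sc Silver Dollar} gap-pile is always a ``move the right token of the pair'' operation, whose legal range is unaffected by the head; the range-increasing over-reach permitted in $T$ is thus never needed. Combining the two properties yields $f(P)=\mex\{\,f(P'): P\to P'\,\}=\G(P)$, which is the asserted identity.
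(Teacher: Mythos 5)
Your proposal is correct and follows essentially the same route as the paper: define $f$ as the right-hand side, show every move of $(S^1,\dots,S^m)$ injects into a move of the decomposed disjunctive sum so that $f$ never survives a move, and then realise any smaller value by either a head move or by moving the \emph{right} token of a tail pair, which is exactly the paper's reduction to one-pile {\sc Nim} heaps on the within-pair gaps. Your explicit observations that the head/tail split is stable and that tail tokens never reach square $0$ are left implicit in the paper but are the same underlying facts.
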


\begin{remark}
Note that {\sc Nim}-sum can be computed in linear time and so each $\G(S^i_T)$ can be computed in polynomial time using the
Sprague-Grundy function for {\sc Silver Dollar} (Section \ref{S.SD}). Therefore, the two functions $\G(S^1,S^2,\ldots,S^m)$
and $\G(S^1_H, S^2_H, \ldots, S^m_H)$ are polynomial-time computable.
\qed \end{remark}


\begin{example}
Consider {\sc Star Silver Dollar} $([2],[2,4,7],[1,4,6,10,12,17])$. We have $S^1 = S^1_H = [2]$, $S^2_H = [2]$, $S^2_T =
[4,7]$, $S^3_H = [1,4]$, $S^3_T = [6,10,12,17]$. Then
\begin{align*}
&\G([2],[2,4,7],[1,4,6,10,12,17])  \\
&= \G([2],[2],[1,4]) \oplus \G([4,7]) \oplus \G([6,10,12,17]) \\
&= \G([2],[2],[1,4]) \oplus(7-4-1) \oplus((10-6-1) \oplus (17-12-1)). \\
&= \G([2],[2],[1,4]) \oplus 5.
\end{align*}
\end{example}


\begin{proof} [Proof of Theorem \ref{SSD.MT}]

For each {\sc Star Silver Dollar} position $p = (S^1,S^2,\ldots,S^m)$, set
\[f(p) = \G(S^1_H, S^2_H, \ldots, S^m_H) \oplus \G(S^1_T) \oplus \G(S^2_T) \oplus \cdots \oplus \G(S^m_T).\]
We need to verify two facts:
\begin{enumerate} \itemsep0em
\item $f(p) \neq f(q)$ if there exists a move from $p$ to $q$, and
\item for any $g < f(p)$, there exists a move from $p$ to $q$ such that $f(q) = g$.
\end{enumerate}
It then follows that $f = \G$.

\item (1)
    Suppose one moves from $p$ to some $q$ by moving a token in some $S^i$. Without loss of generality, we can assume that $i
    = 1$. Denote by ${S^1}'$ the new {\sc Silver Dollar} obtained from $S^1$ by this move. Then, $q = ({S^1}', S^2, \ldots,
    S^m)$ and
    \[f(q) = \underset{\text{to be compared with $f(p)$}}{\underbrace{\G({S_H^1}', S^2_H, \ldots, S^m_H) \oplus
    \G({S_T^1}')}} \oplus \G(S^2_T) \oplus \cdots \oplus \G(S^m_T)\]
    in which either ${S_H^1}' = S^1_H$ and the move $S^1_T \to {S_T^1}'$ is available in $p$ or ${S_T^1}' = S^1_T$ and the
    move $S^1_H \to {S_H^1}'$ is available  in $p$. In both cases, $\G({S_H^1}', S^2_H, \ldots, S^m_H) \oplus \G({S_T^1}')
    \neq \G(S^1_H, S^2_H, \ldots, S^m_H) \oplus \G(S^1_T)$ and so $f(q) \neq f(p)$ since $\G(S^2_T) \oplus \cdots \oplus
    \G(S^m_T)$ is a constant.

    Note that this argument also holds for the case when the zero square in the position $p$ is occupied. In fact, in this
    case, the game is the disjunctive sum of separate {\sc Silver Dollar}s and the move from $p$ to $q$ is exactly a move in
    a single {\sc Silver Dollar} which changes the value of that game while keeping the values of other {\sc Silver Dollar}s
    unchanged.

\item (2) Let $g < f(p)$. Recall that each $S^i_T$ is a {\sc Silver Dollar} with an even number of tokens $(t^i_1,t^i_2,
    \ldots, t^i_{2l_i})$. The solution for {\sc Silver Dollar} gives $\G(S^i_T) = (t^i_2 - t^i_1 - 1) \oplus \cdots \oplus
    (t^i_{2l_i} - t^i_{2l_i-1} - 1)$. Expanding all $\G(S^i_T)$ gives
    \[f(p) = \G(S^1_H, S^2_H, \ldots, S^m_H)
    \oplus
    \bigg(
    \underset{\text{multiple one-pile {\sc Nim}$(t^i_l - t^i_{l-1} - 1)$}}{
    \underbrace{
    \bigoplus_{i \leq m} \big((t^i_2 - t^i_1 - 1) \oplus \cdots \oplus (t^i_{2l_i} - t^i_{2l_i-1} - 1)\big)
    }
    }
    \bigg).\]
By SG theory of sums, the right-hand side is the Sprague-Grundy function of the sum of {\sc Star Silver Dollar} game $(S^1_H,
S^2_H, \ldots, S^m_H)$ and multiple one-pile {\sc Nim}$(t^i_l - t^i_{l-1} - 1)$. Denote this sum by $G$. Then $\G(G) = f(p) >
g$.

By the definition of $\G$, there exists one move from $G$ to some $G'$ such that $\G(G') = g$. There are two possibilities
for this move: either \rm{(i)} moving some token in $(S^1_H, S^2_H, \ldots, S^m_H)$ or \rm{(ii)} reducing some {\sc
Nim}$(t^{i_0}_{l_0} - t^{i_0}_{l_0 - 1} - 1)$.

In the former case \rm{(i)}, without loss of generality, we can assume that this move affects some token in $S^1_H$,
resulting in ${S_H^1}'$. Then
 \[g = \G(G') =\G({S_H^1}', S^2_H, \ldots, S^m_H) \oplus \bigg( \bigoplus_{i \leq m} \big((t^i_2 - t^i_1 - 1) \oplus \cdots
 \oplus (t^i_{2l_i} - t^i_{2l_i-1} - 1)\big) \bigg).\]
Note that this move does not affect any {\sc Nim}$(t^i_{l'} - t^i_{l'-1} - 1)$. Consider the position $q$ obtained from $p$
by this move in $S^1_H$. Note that this move does not affect any $S^i_T$. We have
\[f(q) = \G({S_H^1}', S^2_H, \ldots, S^m_H) \oplus \big(\bigoplus_{i \leq m} \G(S^i_T)\big).\]
Substituting the expansion of $\G(S^i_T)$ into the right-hand side gives $f(q) = g$.

In the latter case \rm{(ii)}, assume that $s$ tokens have been removed from some {\sc Nim}$(t^{i_0}_{l_0} - t^{i_0}_{l_0 - 1}
- 1)$. Without loss of generality, assume that $i_0 = 1$. Then
\begin{align*}
 g =~ & \G(G') = \\
     & \G(S^1_H, S^2_H, \ldots, S^m_H) \\
     & \oplus \big( (t^1_2 - t^1_1 - 1) \oplus \cdots \oplus (t^{1}_{l_0} - t^{1}_{l_0 - 1} - 1 - s) \oplus \cdots \oplus
     (t^1_{2l_1} - t^1_{2l_1-1} - 1) \big) \\
     & \oplus \bigg( \bigoplus_{2 \leq i \leq m} \big((t^i_2 - t^i_1 - 1) \oplus \cdots \oplus (t^i_{2l_i} - t^i_{2l_i-1} -
     1)\big) \bigg).
\end{align*}
Consider the position $q$ obtained from $p$ by moving the token in the square $t^{1}_{l_0}$ to the square $t^{1}_{l_0} - s$
in $S^{1}_T$, resulting in $S'^{1}_T$. This move is available since $s \leq t^{1}_{l_0} - t^{1}_{l_0 - 1} - 1$ and so
$t^{1}_{l_0} - s > t^{1}_{l_0-1}$. Moreover, this move does not affect $S^{1}_H$. By the definition of $f$, we have
\begin{align*}
f(q) = \G(S^1_H, S^2_H, \ldots, S^m_H) \oplus \G({S_T^1}') \oplus \big(\bigoplus_{2 \leq i \leq m}\G(S^i_T)\big).
\end{align*}
Substituting the expansions of $\G({S_T^1}')$ and $\G(S^i_T)$ into the right-hand side gives $f(q) = g$.
\end{proof}


We consider the two special cases of Theorem \ref{SSD.MT} in which all strips have the same parity of numbers of tokens. The
following is immediate from Theorem \ref{SSD.MT}.


\begin{corollary} \label{SSD.odd}
If all strips have odd numbers of tokens, the game of {\sc Star Silver Dollar} is the sum of one {\sc Star Nim}, with the
lowest position token on each strip and separate strips with the remaining tokens.
\end{corollary}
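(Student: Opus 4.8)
The plan is to derive this corollary directly from Theorem \ref{SSD.MT}, since under the odd-parity hypothesis the ``head'' part of the decomposition collapses to a particularly simple form. First I would invoke Theorem \ref{SSD.MT}, which already expresses $(S^1, S^2, \ldots, S^m)$ as the disjunctive sum of the new {\sc Star Silver Dollar} $(S^1_H, S^2_H, \ldots, S^m_H)$ together with the separate {\sc Silver Dollar}s $S^i_T$. The only thing left to check is the \emph{nature} of these two components when every $S^i$ has an odd number of tokens.

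Next I would appeal to Definition \ref{Head.Tail}. When $k_i$ is odd, the head $S^i_H$ is by definition the {\sc Silver Dollar} containing exactly the smallest token of $S^i$, that is $S^i_H = [a^i_1]$, a single-token strip. Hence each strip of $(S^1_H, \ldots, S^m_H)$ carries exactly one token, so by the definition of {\sc Star Nim} recalled earlier this head game is precisely a {\sc Star Nim}, formed from the lowest token of each original strip. Simultaneously, each tail $S^i_T = [a^i_2, \ldots, a^i_{k_i}]$ is a separate {\sc Silver Dollar} holding the remaining tokens of $S^i$, which is exactly the second summand claimed.

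I do not expect any genuine obstacle: the work has all been done in Theorem \ref{SSD.MT}, and the corollary is obtained by specializing the parity and reading off the shapes of the resulting components. The one small point worth stating carefully is that a single token on each strip is not merely a numerical coincidence but means the head component literally obeys the {\sc Star Nim} rules --- a lone token may slide to any lower-numbered square on its strip, there being no other token on that strip to jump over, so the ``no-jumping'' constraint of {\sc Silver Dollar} is vacuous and each strip behaves exactly as one {\sc Nim}-heap attached to the shared $0$-square. With that identification, the decomposition of Theorem \ref{SSD.MT} yields the stated conclusion.
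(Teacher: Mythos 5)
Your proof is correct and matches the paper's approach exactly: the paper states the corollary as immediate from Theorem \ref{SSD.MT}, which is precisely your specialization of the head/tail decomposition to the odd-parity case. Your extra observation that the no-jumping constraint is vacuous on a single-token strip, so the head component genuinely obeys the {\sc Star Nim} rules, is a worthwhile clarification that the paper leaves implicit.
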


For example, direct calculation shows that $\G([2],[2],[1]) = 3$ and so we have
\begin{align*}
\G([2],[2,5,8],[1,5,10]) & = \G([2],[2],[1]) \oplus  \big(\G([5,8]) \oplus \G([5,10])\big)\\
                         & = \G([2],[2],[1]) \oplus (8-5-1)   \oplus (10-5-1) \\
                         & = 3 \oplus 2 \oplus 4 = 5.
\end{align*}


\begin{corollary} \label{SSD.even}
If all strips have even numbers of tokens, the game {\sc Star Silver Dollar} is exactly the sum of its strips. In other
words, if $S^i = [a^i_1,a^i_2,\ldots,a^i_{2k_i}]$ for $1 \leq i \leq m$ then
\[\G(S^1, S^2, \ldots, S^m) = \G(S^1) \oplus \G(S^2) \oplus \cdots \oplus \G(S^m).\]
\end{corollary}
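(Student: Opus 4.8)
The plan is to combine Theorem~\ref{SSD.MT} with the {\sc Silver Dollar} formula of Section~\ref{S.SD}, after first isolating the one nontrivial fact: a {\sc Star Silver Dollar} in which every strip carries exactly two tokens already splits as a disjunctive sum. Writing $S^i = [a^i_1,\ldots,a^i_{2k_i}]$, the even case of Definition~\ref{Head.Tail} gives $S^i_H = [a^i_1,a^i_2]$ and $S^i_T = [a^i_3,\ldots,a^i_{2k_i}]$, so Theorem~\ref{SSD.MT} yields
\[\G(S^1,\ldots,S^m) = \G(S^1_H,\ldots,S^m_H) \oplus \bigoplus_{i=1}^m \G(S^i_T).\]
The {\sc Silver Dollar} formula applied to the even-length strip $S^i$ factors its value along the same head/tail split: since a two-token {\sc Silver Dollar} $[a^i_1,a^i_2]$ has value $a^i_2 - a^i_1 - 1$, the first term of the formula is exactly $\G(S^i_H)$ and the remaining terms reassemble into $\G(S^i_T)$, giving $\G(S^i) = \G(S^i_H) \oplus \G(S^i_T)$. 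Hence everything reduces to the claim $\G(S^1_H,\ldots,S^m_H) = \bigoplus_{i=1}^m \G(S^i_H)$; granting this, I would substitute and regroup the {\sc Nim}-sum to obtain $\G(S^1,\ldots,S^m) = \bigoplus_i \big(\G(S^i_H) \oplus \G(S^i_T)\big) = \bigoplus_i \G(S^i)$.

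The heart of the argument, and the step I expect to be the main obstacle, is therefore proving that a {\sc Star Silver Dollar} $(T^1,\ldots,T^m)$ with $T^i = [a^i_1,a^i_2]$ and $a^i_1 \geq 1$ (no token on square $0$, by Remark~\ref{R.0-SSD}) decomposes, i.e.\ $\G(T^1,\ldots,T^m) = \bigoplus_i (a^i_2 - a^i_1 - 1)$. I would prove this by the same device used for Theorem~\ref{SSD.MT}: set $\phi(T^1,\ldots,T^m) = \bigoplus_i (a^i_2 - a^i_1 - 1)$ and verify that (1) every legal move strictly changes $\phi$, and (2) every value $g < \phi$ is reached in one move. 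The crucial observation is that on each strip there are exactly two kinds of moves: sliding the upper token $a^i_2$ downward, which strictly decreases that strip's gap $a^i_2 - a^i_1 - 1$ and never touches square $0$; and sliding the lower token $a^i_1$ downward, which strictly increases that gap and is the only move that can interact with the shared square $0$.

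For (1), a single move alters the gap of exactly one strip while leaving the others fixed, so the {\sc Nim}-sum changes and $\phi(q) \neq \phi(p)$; this holds whether or not the shared square $0$ forbids some lower-token moves, since removing options cannot make a move value-preserving. For (2), given $g < \phi(p)$ the standard {\sc Nim} computation selects a strip $i_0$ and a \emph{strictly smaller} target gap $g_{i_0}' < g_{i_0}$ achieving the right {\sc Nim}-sum, and this decrease is realized purely by moving the upper token $a^{i_0}_2$ down to $a^{i_0}_1 + 1 + g_{i_0}'$, a move that is always legal and never involves square $0$. Thus the shared square $0$ constrains only the value-increasing lower-token moves, which are irrelevant to the mex, so it is invisible to the Sprague-Grundy computation and $\phi = \G$. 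I do not expect difficulty in the closing {\sc Nim}-sum regrouping; the only real content is the remark that decreasing moves avoid square $0$, which is exactly why the even case fully decomposes whereas the odd case of Corollary~\ref{SSD.odd} must retain a genuine {\sc Star Nim} factor.
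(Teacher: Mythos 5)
Your proof is correct and takes essentially the same route as the paper: apply Theorem~\ref{SSD.MT} to reduce to the case where every strip carries exactly two tokens, observe that the {\sc Silver Dollar} formula gives $\G(S^i)=\G(S^i_H)\oplus\G(S^i_T)$, and then show that the two-token {\sc Star Silver Dollar} splits as the {\sc Nim}-sum of its gaps. The paper dismisses this last step as ``exactly similar to the solution of {\sc Silver Dollar} with even tokens''; your mex-verification --- in particular the observation that the gap-decreasing moves never involve square~$0$, so the shared square is invisible to the Sprague-Grundy computation --- is exactly the argument the paper leaves to the reader.
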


For example,
\begin{align*}
\G([2,5],[3,6,8,10]) & = \G([2,5]) \oplus \G([3,6,8,10]) \\
                     & = (5-2-1) \oplus \big((6-3-1) \oplus (10-8-1)\big) = 1.
\end{align*}

\begin{proof}[Proof of Corollary \ref{SSD.even}]
By Theorem \ref{SSD.MT}, it remains to prove that
\[\G(S^1_H, S^2_H, \ldots, S^m_H) = \G(S^1_H) \oplus \G(S^2_H) \oplus \cdots \oplus \G(S^m_H)\]
if every $S^i_H$ has exactly two tokens. The argument is exactly similar to the solution of {\sc Silver Dollar} with even
tokens.
\end{proof}


\section{$\P$-positions of {\sc Star Nim}}   \label{Ss.SN.P}

beginning simple cases, the two games are by far different, from winning strategy to Sprague-Grundy functions.

We characterize the $\P$-positions of $m$-{\sc Star Nim} for $m \leq 4$.
Except for the two positions $([0],[1])$ and $([1],[1])$, {\sc 2-Star Nim} and 2-pile {\sc Nim} have the same $\P$-positions.
if the payer who makes the last move loses).

\begin{proposition} \label{SN.P.2}
The {\sc 2-Star Nim} $([a_1],[a_2])$ is a $\P$-position if and only if either $a_1 + a_2 = 1$ or $a_1 = a_2 \geq 2$.
\end{proposition}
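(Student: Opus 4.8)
The plan is to verify directly that the set
\[\P_0 = \{([a_1],[a_2]) : a_1 + a_2 = 1\} \cup \{([a],[a]) : a \geq 2\}\]
is exactly the set of positions with Sprague-Grundy value $0$, using Lemma \ref{g-positions} with $g = 0$: it suffices to check that (1) no move joins two positions of $\P_0$, and (2) every legal position outside $\P_0$ has a move into $\P_0$. The one structural feature to keep in mind throughout is the role of the shared square $0$. Since the only square common to the two strips is square $0$, the sole forbidden configuration is both tokens sitting on square $0$; thus $([0],[0])$ is not a position at all, whereas $([a],[a])$ with $a \geq 1$ is perfectly legal, the two tokens then occupying distinct squares of distinct strips. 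A quick consequence is that the only terminal positions are $([0],[1])$ and $([1],[0])$ (a token on square $0$ cannot move, and a token on square $1$ can only move onto the now-blocked square $0$), which accounts for the branch $a_1 + a_2 = 1$.

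For (1), the two terminal positions have no moves, so nothing need be checked there. For a position $([a],[a])$ with $a \geq 2$, any move alters exactly one coordinate to a strictly smaller value, producing $([b],[a])$ or $([a],[b])$ with $b < a$. Since the surviving coordinate equals $a \geq 2$, such a pair can be neither $([0],[1])$ nor $([1],[0])$ (both of which force a coordinate $\leq 1$), nor of the form $([c],[c])$ (which would force $b = a$). Hence no move remains inside $\P_0$.

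For (2), I would take a legal position $([a_1],[a_2])$ with $(a_1,a_2) \neq (0,0)$ that is not in $\P_0$, assume without loss of generality $a_1 \leq a_2$, and split into cases. If $a_1 = a_2$ the only possibility is $([1],[1])$, from which moving the first token to square $0$ reaches $([0],[1]) \in \P_0$. If $a_1 < a_2$, then $a_2 \geq 2$ unless the pair is the excluded $([0],[1])$; in the subcase $a_1 \geq 2$ I slide the larger token down to $([a_1],[a_1]) \in \P_0$; in the subcase $a_1 = 0$ I move the larger token to square $1$, reaching $([0],[1]) \in \P_0$; and in the subcase $a_1 = 1$ I move the larger token to square $0$, reaching $([1],[0]) \in \P_0$. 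In each case the constructed move is legal precisely because the surviving coordinate is nonzero, so square $0$ is never doubly occupied.

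The computations are elementary, so I do not expect a genuine obstacle; the only place demanding care is the bookkeeping at the small values $0$ and $1$, where the shared-square restriction makes {\sc 2-Star Nim} diverge from ordinary two-pile {\sc Nim}. Concretely, one must remember that $([1],[1])$ is an $\N$-position here, since it moves to the terminal $\P$-positions, even though $(1,1)$ is a $\P$-position in {\sc Nim}, and conversely that $([0],[1])$ is a $\P$-position here although $(0,1)$ is an $\N$-position in {\sc Nim}. Verifying that each constructed move is legal, i.e.\ never creates the forbidden $([0],[0])$, is the single check that the ordinary {\sc Nim} argument does not require.
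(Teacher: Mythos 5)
Your proof is correct, and it supplies exactly the routine verification that the paper omits (Proposition \ref{SN.P.2} is stated without proof, and Proposition \ref{G0-5}(i) later defers to it as ``simple''): you check the two defining conditions of $\P$-positions, just as the paper does for the analogous Proposition \ref{SN.P.3}. Your attention to the shared square $0$ --- that $([0],[0])$ is illegal, that $([0],[1])$ and $([1],[0])$ are the only terminal positions, and that $([1],[1])$ is consequently an $\N$-position --- is precisely the point where the argument differs from ordinary two-pile {\sc Nim}, and you handle it correctly.
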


Note that the condition $a_1 = a_2$ in Proposition \ref{SN.P.2} is equivalent to $a_1 \oplus a_2 = 0$, which is similar to
$\P$-positions in {\sc Nim}.

The $\P$-positions of {\sc 3-Star Nim} coincide with those of 3-pile {\sc Nim}.

\begin{proposition} \label{SN.P.3}
The {\sc 3-Star Nim} position $([a_1],[a_2],[a_3])$ is a $\P$-position if and only if $a_1 \oplus a_2\oplus a_3 = 0$.
\end{proposition}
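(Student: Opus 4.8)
The plan is to treat \textsc{3-Star Nim} as ordinary three-pile \textsc{Nim} subject to the single extra rule that at most one token may occupy the shared square $0$, and to establish the characterisation by checking the two defining properties of the $\P$-positions recorded in Lemma~\ref{g-positions} with $g=0$. Writing a position as $(a_1,a_2,a_3)$ and letting $T$ be the set of legal positions satisfying $a_1\oplus a_2\oplus a_3=0$, it suffices to show: (A) no legal move joins two elements of $T$, and (B) from every legal position outside $T$ there is a legal move into $T$. Since the game is short, these two properties identify $T$ with $S_0$, the set of $\P$-positions. The legal positions split into exactly two regimes: those with all three entries positive, and those with exactly one entry equal to $0$ (two or three zeros being forbidden because the $0$-squares are identified).

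Property (A) is immediate and identical to the situation in \textsc{Nim}: a move decreases a single coordinate $a_i$ to some $a_i'<a_i$, and if the \textsc{Nim}-sum were to remain $0$ we would obtain $a_i'=a_i$, a contradiction. As the legal moves of \textsc{3-Star Nim} form a subset of the moves of ordinary \textsc{Nim}, this conclusion holds a fortiori.

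The substance lies in Property (B), where the zero-square restriction could in principle obstruct the classical Bouton move. Set $s=a_1\oplus a_2\oplus a_3\neq 0$. In the first regime, with $a_1,a_2,a_3\geq 1$, the square $0$ is empty, so every \textsc{Nim} move is legal; hence the standard move that replaces the pile containing the highest set bit of $s$ by its \textsc{Nim}-sum with $s$ is available and lands in $T$. In the second regime, say $a_2=0$ with $a_1,a_3\geq 1$, the token at $0$ cannot move, so any move alters one of $a_1,a_3$; to reach $T$ we need the two surviving entries to become equal, and since $a_1\oplus a_3=s\neq 0$ forces $a_1\neq a_3$, we simply move the larger of the two down to the value of the smaller. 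That target value is at least $1$, so the move does not attempt to place a token on the occupied square $0$ and is therefore legal, and the resulting position lies in $T$.

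The main obstacle to anticipate is precisely this interaction at square $0$: the classical argument might seem to demand moving a token onto $0$ while $0$ is already occupied. The observation that dissolves it is that whenever a token already sits on $0$ (the second regime) that token is frozen, so the remaining game is ordinary two-pile \textsc{Nim} played on positive squares, whose balancing move never targets $0$. With (A) and (B) verified in both regimes, Lemma~\ref{g-positions} gives $T=S_0$, which is exactly the assertion of the proposition.
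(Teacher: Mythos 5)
Your proof is correct and takes essentially the same route as the paper: both verify the two defining properties of the set of $\P$-positions and invoke Bouton's balancing move for the existence part. You are in fact slightly more careful than the paper on the one point where the star structure could interfere --- checking explicitly that the balancing move never tries to place a second token on the shared square $0$ --- a detail the paper leaves implicit.
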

\begin{proof}
Set $\A = \{([a_1],[a_2],[a_3]) \mid a_1 \oplus a_2\oplus a_3 = 0\}$. We need to verify the following two facts:
\begin{enumerate} \itemsep0em
\item there is no move between two distinct positions in $\A$; and
\item from any position not in $\A$, there exists a move that terminates in $\A$.
\end{enumerate}
For (1), let $([a_1],[a_2],[a_3]) \in \A$. Then $a_1 \oplus a_2\oplus a_3 = 0$. For every $k < a_1$, moving from $[a_1]$ to
$[k]$ in the first strip results in position $([k],[a_2],[a_3]) \notin \A$  as $k \oplus a_2\oplus a_3 \neq 0$. Similarly,
moving in the second or third strip also results in positions not in $\A$.

For (2), let $([a_1],[a_2],[a_3]) \notin A$. Then $a_1 \oplus a_2\oplus a_3 \neq 0$. As shown by Bouton \cite{Bouton}, one
can find a $k$ such that
$k < a_1$ and $k \oplus a_2 \oplus a_3 = 0$, or
$k < a_2$ and $a_1 \oplus k \oplus a_3 = 0$, or
$k < a_3$ and $a_1 \oplus a_2\oplus k = 0$.
If the first (resp.~second, third) case holds, moving from $a_1$ (resp.~$a_2$, $a_3$) to $[k]$ in the first (resp.~second,
third) strip terminates in $\A$.
\end{proof}

It appears complicated to generalise the above results to 4-{\sc Star Nim}. We obtain a formula for $\P$-positions
$([a_1],[a_2],[a_3],[a_4])$ with only $a_1 = 1 \leq a_2 \leq a_3 \leq a_4$ as follows.

\begin{proposition} \label{SN.P.4}
The {\sc 4-Star Nim} position $([1],[a_2],[a_3],[a_4])$ with $1 \leq a_2 \leq a_3 \leq a_4$ is a $\P$-position if and only if
$a_2 \oplus a_3 \oplus a_4 = 0$.
\end{proposition}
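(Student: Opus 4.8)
The plan is to apply Lemma~\ref{g-positions} with $g=0$: I will exhibit a set $\A$ of positions and check that (1) no move joins two positions of $\A$, and (2) every reachable position outside $\A$ has a move into $\A$; this forces $\A$ to be exactly the set of $\P$-positions. The starting observation is that, from $([1],[a_2],[a_3],[a_4])$, the token on the first strip can only ever sit on square $1$ or square $0$, so every reachable position falls into one of three families: (II) the first token is on square $0$, written $(0,b_2,b_3,b_4)$ with all $b_i\ge 1$; (Ib) the first token is on square $1$ and exactly one other token is on square $0$; and (Ia) the first token is on square $1$ and every $b_i\ge 1$. Families (II) and (Ib) are closed under moves (the token on square $0$, and any token blocked behind it, are frozen), and in each the remaining tokens play ordinary {\sc Nim} with the square-$0$ option deleted, i.e.\ with each pile shifted down by one: (II) is $3$-pile {\sc Nim} on $(b_2-1,b_3-1,b_4-1)$ and (Ib) is $2$-pile {\sc Nim} on the two surviving shifted piles. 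I therefore let $\A$ contain every (II)-position with $(b_2-1)\oplus(b_3-1)\oplus(b_4-1)=0$, every (Ib)-position whose two surviving tokens are equal, and every (Ia)-position with $b_2\oplus b_3\oplus b_4=0$; by Bouton's theorem (Lemma~\ref{SG.Nim}) conditions (1),(2) already hold inside the closed families (II) and (Ib), so all the work is at the (Ia)-positions.

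For condition (1) at an (Ia)-position $p=(1,b_2,b_3,b_4)$ with $b_2\oplus b_3\oplus b_4=0$, I consider the three kinds of move. Decreasing some non-first token to a value $c\ge 1$ keeps us in (Ia) and changes the {\sc Nim}-sum of the last three coordinates to $b_i\oplus c\ne 0$, leaving $\A$. Sending the non-first token at $b_i$ to square $0$ produces an (Ib)-position whose two surviving tokens $b_j,b_k$ satisfy $b_j\oplus b_k=b_i\ge 1$, hence $b_j\ne b_k$, again leaving $\A$. The remaining move sends the first token to square $0$, giving the (II)-position $(0,b_2,b_3,b_4)$; for this I must show $(b_2-1)\oplus(b_3-1)\oplus(b_4-1)\ne 0$.

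For condition (2) at an (Ia)-position with $s:=b_2\oplus b_3\oplus b_4\ne 0$, I apply Bouton's rule: pick a coordinate $b_i$ containing the top bit of $s$ and set $c=b_i\oplus s<b_i$. If $c\ge 1$, the move $b_i\to c$ stays in (Ia) and zeroes the {\sc Nim}-sum, landing in $\A$. If $c=0$ then $b_i=s$, so the other two coordinates satisfy $b_j\oplus b_k=0$, i.e.\ $b_j=b_k$; sending $b_i$ to square $0$ then lands on an (Ib)-position in $\A$. Either way a move into $\A$ exists. Specialising to the start, $([1],[a_2],[a_3],[a_4])$ is an (Ia)-position, so it is a $\P$-position iff $a_2\oplus a_3\oplus a_4=0$, as claimed.

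The one step with real content is the claim used in condition~(1): if $b_2\oplus b_3\oplus b_4=0$ with all $b_i\ge 1$, then $(b_2-1)\oplus(b_3-1)\oplus(b_4-1)\ne 0$. This is a short parity argument: $b_2\oplus b_3\oplus b_4=0$ forces an even number (so $0$ or $2$) of the $b_i$ to be odd, hence an odd number (so $3$ or $1$) of the values $b_i-1$ to be odd, so the {\sc Nim}-sum $(b_2-1)\oplus(b_3-1)\oplus(b_4-1)$ has lowest bit equal to $1$ and in particular is nonzero. I expect this parity observation --- the fact that grounding the first token forces all three remaining piles down by one and thereby necessarily spoils a balanced position --- to be the crux, since it is exactly what makes the single interaction at square $0$ behave differently from plain {\sc Nim} and accounts for the $1$-free criterion $a_2\oplus a_3\oplus a_4=0$.
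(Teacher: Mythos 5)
Your proof is correct and follows essentially the same route as the paper: both arguments take the candidate set to consist of the balanced positions together with the shifted-Nim-balanced positions once a token reaches square $0$, reduce the token-on-zero families to ordinary (shifted) Nim via Bouton, and isolate the same crux, namely the parity observation that $b_2\oplus b_3\oplus b_4=0$ forces $(b_2-1)\oplus(b_3-1)\oplus(b_4-1)\neq 0$. The only difference is organizational — you split off the ``first token frozen on square $1$'' positions as a separate family, whereas the paper absorbs them into $\B_0$ by permuting strips — so no further comparison is needed.
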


\begin{proof}
Note that the {\sc 4-Star Nim} position $([0],[a],[b],[c])$ is exactly the 3-pile {\sc Nim} position $(a-1,b-1,c-1)$ and so
it is a $\P$-position if and only if $(a-1) \oplus (b-1) \oplus (c-1) = 0$. Set

$\B_0 = \{([0],[a_2],[a_3],[a_4]) \mid (a_2-1) \oplus (a_3-1) \oplus (a_4-1) = 0\}$,

$\B_1 = \{([1],[a_2],[a_3],[a_4]) \mid 1 \leq a_2 \leq a_3 \leq a_4, a_2 \oplus a_3 \oplus a_4 = 0\}$,

$\B = \B_0 \cup \B_1$.

We need to verify the following two facts:
\begin{enumerate} \itemsep0em
\item there is no move between two distinct positions in $\B$; and
\item from any position $([a_1],[a_2],[a_3],[a_4]) \notin \B$ with $a_1 \leq 1 \leq a_2 \leq a_3 \leq a_4$, there exists a
    move that terminates in $\B$.
\end{enumerate}

For (1), it can be verified that there is no move between positions in $\B_0$ and between positions in $\B_1$, using property
$x \oplus y \neq x' \oplus y$ if $x \neq x'$. We show that there is no move from a position in $\B_1$ to a position in
$\B_0$. Let $p = ([1],[a_2],[a_3],[a_4]) \in B_1$. Then $a_2 \oplus a_3 \oplus a_4 = 0$.

If we move from $[1]$ to $[0]$ in the first strip, the obtained position is $([0],[a_2],[a_3],[a_4])$. Note that $a_2 \oplus
a_3 \oplus a_4 = 0$ and $(a_2-1) \oplus (a_3-1) \oplus (a_4-1) = 0$ cannot hold at the same time. To see this, consider the
binary representations of the numbers $a_i$ and $a_{i-1}$.  If $a_1\oplus a_2\oplus a_3=0$, an even number of $a_2,a_3,a_4$
end in 1, so an odd number of $a_2-1,a_3-1,a_4-1$ end in 1, so $(a_1-1)\oplus(a_2-1)\oplus(a_3-1)\not=0$.

If we move from $[a_2]$ to $[0]$ in the second strip, the obtained position is $([0], [1], [a_3],[a_4])$ (after swapping the
first two strips). Assume by way of contradiction that this position belongs to $\B_0$. Then $(a_3-1) \oplus (a_4 - 1) = 0$
so that $a_3 = a_4$. This contradicts $a_2 \oplus a_3 \oplus a_4 = 0$ with $a_2 > 0$. If $a_2 > 1$ and we move from $[a_2]$
to $[1]$ in the second strip, the obtained position is $([1], [1], [a_3],[a_4])$.  Assume by way of contradiction that this
position belongs to $\B_1$. Then $1 \oplus a_3 \oplus a_4 = 0$, contradicting $a_2 \oplus a_3 \oplus a_4 = 0$ with $a_2 > 1$.

Similarly, one can check that a move from $[a_3]$ in the third strip or from $[a_4]$ in the fourth strip cannot lead $p$ to
either $\B_0$ or $\B_1$.

For (2), let $q = ([a_1],[a_2],[a_3],[a_4]) \notin \B$ with $a_1 \leq 1 \leq a_2 \leq a_3 \leq a_4$.
If $a_1 = 0$, $(a_2-1) \oplus (a_3-1) \oplus (a_4-1) \neq 0$ since $q \notin \B_0$. Consider the {\sc Nim} position $(a_2-1,
a_3-1, a_4-1)$. As shown by Bouton \cite{Bouton}, there exists $k$ such that
(1) $k < a_2-1$ and $k \oplus (a_3-1) \oplus (a_4-1) = 0$ or
(2) $k < a_3-1$ and $(a_2-1) \oplus k \oplus (a_4-1) = 0$ or
(3) $k < a_4-1$ and $(a_2-1) \oplus (a_3-1) \oplus k = 0$.
Without loss of generality, we can assume that the case (1) holds. Consider the move from $[a_2]$ to $[k+1]$ in the second
strip. One can check that the obtained position belongs to $\B_0$.

If $a_1 = 1$, $a_2 \oplus a_3 \oplus a_4 \neq 0$ since $q \notin \B_1$. Consider the {\sc Nim} position $(a_2, a_3, a_4)$. As
shown by Bouton \cite{Bouton}, there exists $k$ such that
(1) $k < a_2$ and $k \oplus a_3 \oplus a_4 = 0$ or
(2) $k < a_3$ and $a_2 \oplus k \oplus a_4 = 0$ or
(3) $k < a_4$ and $a_2 \oplus a_3 \oplus k = 0$.
Without loss of generality, we can assume that the case (1) holds. If $k = 0$, $a_3 = a_4 > 0$. The obtained position can be
represented as {\sc 4-Star Nim} position $([0],[1],[a_3],[a_4]) \in \B_0$. If $k > 0$, the obtained position is
$([1],[k],[a_3],[a_4]) \in \B_1$.
\end{proof}

Direct calculation shows that many $\P$-positions of {\sc Star Nim} also satisfy the condition for $\P$-positions in {\sc
Nim}. For example, there are 5089 $\P$-positions $([a_1],[a_2],[a_3],[a_4])$ with $a_1 \leq a_2 \leq a_3 \leq a_4 \leq 50$ in
{\sc Nim} of which 4593 are $\P$-positions in {\sc Star Nim}. There are 112 $\P$-positions $([a_1],[a_2],[a_3],[a_4],[a_5])$
with $a_1 \leq a_2 \leq a_3 \leq a_4 \leq a_5 \leq 10$ of which 79 are $\P$-positions in {\sc Star Nim}.


\section{Sprague-Grundy function of {\sc 2-Star Nim}}  \label{Ss.SN.propety}

In the rest of paper, we study {\sc 2-Star Nim}. For convenience, we omit square bracket $[.]$ in position format. A position
$([a],[b])$ will be denoted by $(a,b)$.

Table \ref{T1} provides the Sprague-Grundy values $\G(a,b)$ of {\sc $2$-Star Nim} for $0 \leq a \leq 10$, $1 \leq  b \leq
15$. We establish several properties of the extension of this table.
\begin{table}[ht]
\begin{center}
\begin{tabular}{c|ccccccccccccccc}
10    & &  & &  & & &  &  &  &0  &2   &3  &6  &21 &20\\
9     & &  & &  & & &  &  &0  &1  &5   &19 &20 &18 &21\\
8     & &  & &  & & &  &0  &2  &5  &4   &18 &17 &20 &19\\
7     & &  & &  & &  &0  &1  &3  &4  &14  &17 &18 &19 &11\\
6     & &  & &  &  &0  &2  &3  &4  &13 &12  &16 &10 &11 &18\\
5     & &  & &  &0  &1  &9  &10 &11 &8  &7   &15 &16 &17 &13\\
4     & &  & &0  &2  &9  &10 &11 &6  &7  &8   &14 &15 &16 &12\\
3     & &  &0 &1  &6  &8  &5  &9  &7  &12 &13  &10 &11 &15 &17\\
2     & &0  &4 &5  &3  &7  &8  &6  &10 &11 &9   &13 &14 &12 &16\\
1     &1 &2  &3 &4  &5  &6  &7  &8  &9  &10 &11  &12 &13 &14 &15\\
\hline
a/b   &1 &2  &3 &4  &5  &6  &7  &8  &9 &10  &11  &12 &13 &14 &15
 \end{tabular}
\caption{Sprague-Grundy values $\G(a,b)$s for $1 \leq a \leq b$, $a \leq 10, b \leq 15$}\label{T1}
\end{center}
\end{table}

We characterize 2-{\sc Star Nim} positions whose Sprague-Grundy values are at most 5.
Given some Sprague-Grundy value $g$, we give an algorithm that computes the sequence $\big((a_i,b_i)\big)_{i \geq 0}$ of {\sc
2-Star Nim} positions whose Sprague-Grundy values are $g$.
We show that the sequence $(b_i - a_i)_{i \geq 0}$ is ultimately periodic.
Finally, we prove the ultimately additive periodicity of rows (and similarly columns) of the matrix of Sprague-Grundy values
and conjecture the ultimate periodicity of diagonals parallel to the main diagonal. 

\subsection{{\sc $2$-Star Nim} positions of small Sprague-Grundy values}

We are able to characterize positions of {\sc $2$-Star Nim} whose Sprague-Grundy values are up to $5$ as follows.


\begin{proposition} \label{G0-5}
Denote by $G_n$ the set of positions whose Sprague-Grundy value is $n$. We have
\begin{enumerate} \itemsep0em
\item [\rm{(i)}] $G_0 = \{(0,1), (k,k) \mid k \geq 2\}$;
\item [\rm{(ii)}] $G_1 = \{(0,2), (1,1), (2k+1,2k+2) \mid k \geq 1\}$;
\item [\rm{(iii)}] $G_2 = \{(0,3), (1,2), (2k,2k+1) \mid k \geq 2\}$;
\item [\rm{(iv)}] $G_3 = \{(0,4), (1,3), (2,5), (4k+2+i,4k+2+i+2) \mid k \geq 1, 0 \leq i \leq 1\}$;
\item [\rm{(v)}] $G_4 = \{(0,5), (1,4), (2,3), (6,9), (7,10), (8,11), (4k+i,4k+i+2) \mid k \geq 3, 0 \leq i \leq 1\}$;
\item [\rm{(vi)}] $G_5 = \{(0,6), (1,5), (2,4), (3,7), (8,10), (9,11), (4k+i,4k+i+3) \mid k \geq 3, 0 \leq i \leq 2\}$;
\end{enumerate}
\end{proposition}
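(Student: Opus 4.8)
The plan is to prove (ii)--(vi) together with (i) by strong induction on $a+b$, checking the six displayed sets against the $\mex$ rule. Throughout I regard a position as an unordered pair $(a,b)$ with $a\le b$; write $X_0,\dots,X_5$ for the six sets on the right-hand sides and let $X_{\ge 6}$ denote all other positions. The goal is to show, for every $p=(a,b)$, that $\G(p)=n$ whenever $p\in X_n$ with $n\le5$, and $\G(p)\ge 6$ whenever $p\in X_{\ge6}$. Item (i) requires nothing new, being a restatement of Proposition \ref{SN.P.2} ($a_1+a_2=1$ gives $(0,1)$, and $a_1=a_2\ge2$ gives $(k,k)$). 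Before the induction I would record three facts used repeatedly: the only terminal position is $(0,1)$; a position with a token on square $0$ is a single Nim heap, whence $\G(0,b)=b-1$, which identifies the leading member $(0,n+1)$ of each $X_n$; and, from the first row of Table \ref{T1}, $\G(1,b)=b$. The last two already encode the way square $0$ makes the game deviate from $2$-pile Nim.

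With the induction hypothesis available for all positions of smaller $a+b$, the $\mex$ definition shows that the inductive step for a given $p$ is equivalent to the two local conditions of Lemma \ref{g-positions}: (1) no move from $p$ stays in the class of $p$; and (2) if $p\in X_n$ then for each $m<n$ there is a move from $p$ into $X_m$, while if $p\in X_{\ge6}$ then there is a move into each of $X_0,\dots,X_5$. Indeed, by the induction hypothesis a move into $X_m$ ($m\le5$) reaches value $m$ and a move into $X_{\ge6}$ reaches value $\ge6$, so (1) says the value $n$ is missed and (2) says all smaller values are hit.

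For condition (1) I would use the arithmetic shape of the periodic tails: each consists of pairs of a fixed gap $d$ (with $d=0,1,1,2,2,3$ for $n=0,\dots,5$) whose smaller coordinate is restricted to a set of residues modulo $4$ (modulo $2$ for $n=1,2$). A single move out of $(a,a+d)$ changes the gap except for the one move $(a,a+d)\to(a-d,a)$, which decreases the smaller coordinate by $d$; so condition (1) for a periodic tail amounts to checking that subtracting $d$ carries the admissible residue set off itself. This is immediate for the gap-$1$ and gap-$2$ classes, and the delicate case is the gap-$3$ class $G_5$, where three residues are in play and the interaction of this gap-preserving move with the congruence must be examined with care; the finitely many listed exceptional members (such as $(2,5)$, $(6,9),(7,10),(8,11)$, $(3,7),(8,10),(9,11)$) I would settle by direct inspection.

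The hard part, and the bulk of the work, will be condition (2): landing on a thin target set in a single move. From $(a,b)$ with $a<b$, the gap-$d$ positions reachable in one move have smaller coordinate equal to one of $a-d$ (move the larger token below the smaller, available when $d\le a$), $a$ (move the larger token to $a+d$, available when $d<b-a$), or $b-d$ (move the smaller token to $b-d$, available when $d>b-a$). The crux is to verify, for each target $g\le5$ and each source position lying outside $X_0\cup\cdots\cup X_g$, that at least one available landing coordinate has both the right magnitude and the right residue, or else equals one of the explicitly listed small members. This is a finite but intricate residue computation modulo $4$, and its genuinely awkward points are exactly the transitional entries, where the generic band has not yet stabilised, together with the small positions near square $0$ at which the non-Nim restriction changes the available moves; all of these must be checked by hand before the uniform band argument closes the induction.
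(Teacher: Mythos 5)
Your proposal follows essentially the same route as the paper: both reduce the claim to the two conditions of Lemma \ref{g-positions} (no move between two positions of $G_n$; a move into $G_n$ from any position outside $G_0\cup\cdots\cup G_n$), with item (i) delegated to Proposition \ref{SN.P.2}. The paper leaves the case-checking entirely to the reader, so your residue-class organisation of that verification is, if anything, more detailed than the published argument.
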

\begin{proof}
Recall that $\rm{(i)}$ is part of Proposition \ref{SN.P.2}. For each $n$ with $1 \leq n \leq 5$, one needs to verify two
facts: (1) there is no move between two positions in $G_n$, and (2) from any position not in $G_0 \cup G_1 \cup \ldots \cup
G_n$, there exists one move to some position in $G_n$. The verification of the claims is simple and we leave to the reader.
\end{proof}


\subsection{An algorithm for the Sprague-Grundy function for {\sc 2-Star Nim}}   \label{Ss.2SN.alg}
Denote by $S^g$ the sequence $\big((a^g_0,b^g_0), (a^g_1,b^g_1), (a^g_2,b^g_2), \ldots\big)$ with $a_i^g \leq b_i^g$, called
the {\it $g$-sequence}, of positions whose Sprague-Grundy values are $g$ such that $a^g_i < a^g_j$ if $i<j$. Note that the
set $G_n$ in Proposition \ref{G0-5} is exactly the set of elements in the sequence $S^n$. We establish an algorithm that
computes the sequence member $(a^g_n,b^g_n)$ from various $(a^k_i,b^k_i)$ for $k \leq g$. We assume $g \geq 1$ as the case
for $g = 0$ has been solved linearly  in Proposition \ref{SN.P.2}.

$k$-sequence $S^k$ for all $k < g$ and the first $n-1$ members $(a^g_l,b^g_l)$ for $l < n$. We introduce an algorithm that
computes $n^{\text{th}}$ sequence member $(a^g_n,b^g_n)$.

We denote the sequence computed by the algorithm by $T^g = \big((x^g_n,y^g_n)\big)_{n \geq 0}$, and will shortly show that
the sequences $S^g$ and $T^g$ are the same.


\begin{algorithm} \label{alg1} \

\smallskip
\noindent
Input: $g, n$. 

\noindent
Previously computed: $(x^g_i,y^g_i)$ for $i \leq n-1$; $(x^k_j,y^k_j)$ for $k \leq g$ and all $j$ such that $x^k_j \leq
x^g_n$.
\begin{enumerate}  \itemsep0em
\item $x^g_n \leftarrow \mex\{ x^g_i, y^g_i \mid 0 \leq i < n\}$.
\item $y^g_n \leftarrow x^g_n + z$, where $z$ is the smallest nonnegative integer such that
    \begin{enumerate}
    \item $x^g_n + z \notin \{y^g_l \mid 0 \leq l < n\}$, and
    \item $(x^g_n,x^g_n + z) \notin T^k$ for $k < g$.
    \end{enumerate}
\end{enumerate}

\smallskip
\noindent
Output: $(x^g_n,y^g_n)$. \qed
\end{algorithm}

\begin{remark} \label{WSG}
The algorithm has the same overall structure as the Wythoff-Sprague-Grundy (WSG) algorithm due to Blass and Fraenkel
\cite{Bla90}, for the game {\sc Wythoff}, although there are many differences in the details of analysis. \qed
\end{remark}


Later, Example \ref{Ex.alg1} shows the calculation of $(x^3_5, y^3_5)$ together with the data required, and is illustrated in
Figure \ref{F.alg1}. We now discuss Algorithm \ref{alg1}.

\begin{remark} \label{R.ij}\
\begin{enumerate}  \itemsep0em
\item By the definition of $\mex$, Step (1) of Algorithm \ref{alg1} implies that if $i < j$ then $x^g_i < x^g_j$.
\item 
    To compute the term $(x^g_n,y^g_n)$ of sequence $T^g$, we need the terms of each of the sequences $T^k$ (for $k \leq
    g$) up to a point where we can check if $(x^g_n,y^g_n) \notin T^k$. Note that $x^g_i < x^g_{n-1}$ for all $i < n-1$ and
    $x^g_n = \mex\{ x^g_i, y^g_i \mid 0 \leq i < n\}$ with at most $n$ values $y^g_i$. Therefore, $x^g_n \leq x^g_{n-1} + n
    + 1$. Thus, we assume esch $T^k$ has been constructed up to the position $(x^k_l, y^k_l)$  whose first entry $x^k_l$ is
    equal or greater than $x^g_{n-1} + n + 1$.
\item We have $n \leq x^g_n \leq 2n$. In fact, since $x^g_i < x^g_{i+1}$, we have $n \leq x^g_n$. Also in Step (1), the set
    $\{ x^g_i, y^g_i \mid 0 \leq i < n\}$ has at most $2n$ elements while the set $\{i \mid 0 \leq i \leq 2n\}$ has $2n+1$
    elements. Therefore, $\mex \{ x^g_i, y^g_i \mid 0 \leq i < n\} \leq 2n$. Thus, $n \leq x^g_n \leq 2n$.
\item We have $y^g_n \leq n + (x^g_n+1)g$. In fact, there are at most $n$ values $y^g_i$ that $y^g_n$ can not be assigned
    to. In each $T^k$, the worst case is when $T^k$ contains $x^g_n+1$ pairs, as mentioned in (2). The second entries of
    these pairs can not be assigned to $y^g_n$. For $0 \leq k \leq g-1$, there are such $(x^g_n+1)g$ values. Thus, $y^g_n
    \leq n + (x^g_n+1)g$. By (3), we also have $y^g_n \leq n + (2n+1)g$. \qed
\end{enumerate}
\end{remark}


\medskip

\begin{remark}
We analyse the complexity of Algorithm \ref{alg1}.
Step (1) first sorts the set $\{ x^g_i, y^g_i \mid 0 \leq i < n\}$ in increasing order, requiring \rm{O}$\big(n(\log n + \log
g)\big)$ steps using Radix Sort, since we have $n$ integers of \rm{O}$(\log n + \log g)$ bits each.
It then scans the list of up to $2n$ elements to find its $\mex$, requiring \rm{O}$(n)$ comparisons of these integers.
Thus, Step (1) takes \rm{O}$\big(n(\log n + \log g)\big)$ steps.
The construction of each $T^k$ by Algorithm 1 up to a point as mentioned in Remark \ref{R.ij} requires up to $x^g_n + 1$
pairs in the worst case. We have proved in Remark \ref{R.ij} that $n \leq x^g_n \leq 2n$. We compare $x^g_n$ with only the
first entries of these pairs, each requiring time \rm{O}$(\log n)$ until we find one entry equal to or greater than $x^g_n$.
This process requires \rm{O}$(n\log n)$ steps. If this condition is meet, we compare $y^g_n$ with the second entry, requiring
time \rm{O}$(\log n + \log g)$.
Thus, assuming we have previously computed the required terms of  $T^k$ for $0\leq k \leq g-1$, Step (2) costs \rm{O}$(ng\log
n)$ steps.
The total time complexity of Algorithm \ref{alg1} is therefore \rm{O}$\big(n(\log n + \log g + g \log n)\big)$. This is
exponential in the sizes of the inputs $n$ and $g$. \qed
\end{remark}

Algorithm \ref{alg1} could be used as part of an iterative scheme to compute the $(x^g_n, y^g_n)$ in turn, provided the
computation is organised so that the required previously computed pairs are always available.

\begin{theorem}
The sequence $T^g$ computed by Algorithm \ref{alg1} coincides with the $g$-sequence $S^g$.
\end{theorem}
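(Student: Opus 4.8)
The plan is to prove $T^g=S^g$ by a double induction that mirrors the structure of Algorithm \ref{alg1}: an outer induction on $g$, throughout which we may assume $T^k=S^k$ for every $k<g$ (the base case $g=0$ being Proposition \ref{SN.P.2}), and an inner induction on $n$, throughout which we may assume $(x^g_i,y^g_i)=(a^g_i,b^g_i)$ for all $i<n$. At each inner step the goal is to show that Step (1) returns the first coordinate $x^g_n=a^g_n$ and Step (2) returns the second coordinate $y^g_n=b^g_n$ of the $n$-th member of $S^g$. I would phrase everything through the two properties in Lemma \ref{g-positions}. The basic translation is that from a sorted position $(a,b)$ the legal moves are exactly those that strictly decrease one of the two coordinates, the only restriction being that $(0,0)$ is not a position; hence two distinct level-$g$ positions admit a move between them if and only if they share a coordinate value, the lone exception being a diagonal position $(k,k)$, whose repeated value lives inside a single position.

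First I would extract from property (1) of Lemma \ref{g-positions} that within $S^g$ no coordinate value is reused by two different positions. This immediately gives the easy half of Step (1): $a^g_n$ exceeds every earlier first coordinate by the ordering, and it cannot equal an earlier second coordinate $b^g_i$, since otherwise one move decreasing $b^g_n$ down to $a^g_i$ would join $(a^g_n,b^g_n)$ to $(a^g_i,a^g_n)$, two positions of $S^g$, contradicting property (1). Hence $a^g_n\notin\{x^g_i,y^g_i\mid i<n\}$ and so $a^g_n\ge\mex\{x^g_i,y^g_i\mid i<n\}$.

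The reverse inequality is the first genuine difficulty. It amounts to the covering statement that every value is a coordinate of exactly one position of $S^g$: ``at most one'' is the distinctness above, and ``at least one'' is what must be proved. Granting it, if $M=\mex\{x^g_i,y^g_i\mid i<n\}$ were strictly below $a^g_n$, then $M$ would be a coordinate of a unique level-$g$ position whose first coordinate is at most $M<a^g_n$, forcing that position to have index $<n$ and $M$ to be an earlier coordinate, contradicting the definition of $\mex$; thus $x^g_n=a^g_n$. To prove the covering I would fix a value $v$ and study the ``hook'' of positions $\{(a,v)\mid a\le v\}\cup\{(v,b)\mid b\ge v\}$ that have $v$ as a coordinate, and show that the value $g$ must occur on it. The subtlety is that these positions have many successors lying off the hook, so the hook is not a self-contained subgame; controlling the resulting $\mex$ computation, together with property (2) of Lemma \ref{g-positions} and the inductive information about the lower levels $T^k$, is the heart of this half.

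For Step (2), property (1) again shows there is a unique $b\ge a^g_n$ with $\G(a^g_n,b)=g$, and it is $b^g_n$; so it suffices to show that the least $z\ge 0$ meeting conditions (2a) and (2b) yields $b=a^g_n+z=b^g_n$. By the outer inductive hypothesis $T^k=S^k$ for $k<g$, condition (2b) says precisely that $\G(a^g_n,b)\ge g$, while (2a) forbids $b$ from being an earlier second coordinate. The crux, and the step I expect to be the main obstacle of the whole argument, is to show that these two conditions together are equivalent, at the minimal $z$, to $\G(a^g_n,b)=g$; the only danger is a value $b$ with $\G(a^g_n,b)>g$ that nevertheless passes (2a) and (2b). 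Ruling this out means proving that every $b\ge a^g_n$ with $\G(a^g_n,b)>g$ is already recorded as an earlier second coordinate $b^g_i$, so that (2a) discards it. This is the $2$-{\sc Star Nim} analogue of the delicate minimality lemma underlying the correctness of the Blass--Fraenkel WSG algorithm noted in Remark \ref{WSG}, and it is where the interplay between a fixed row, the columns, and the lower levels must be handled with care. Once it is established, minimality of $z$ selects the first genuine level-$g$ entry, giving $y^g_n=b^g_n$ and closing both inductions.
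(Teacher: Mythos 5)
Your plan is a genuinely different route from the paper's, and as written it has real gaps at exactly the two places you flag as ``the heart'' and ``the crux.'' You reduce correctness of Step (1) to the covering statement that every nonnegative integer occurs as a coordinate of some member of $S^g$, and you reduce correctness of Step (2) to the claim that every candidate $b$ with $\G(a^g_n,b)>g$ is blocked by condition (2a); but you prove neither. The covering statement is true --- it is essentially Proposition \ref{agb}, which the paper derives from the bounds $b-a\le\G(a,b)\le a+b-1$ of Proposition \ref{SN.2.bound} --- but your sketched ``hook'' argument is not carried out, and without the bounds it is not clear it can be. Your Step (2) claim is, as literally stated, false: for $b$ much larger than $b^g_n$ one has $\G(a^g_n,b)>g$ while the finitely many earlier second coordinates $b^g_i$ are all bounded by roughly $a^g_{n-1}+g$. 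What you actually need is the restricted claim for $a^g_n\le b<b^g_n$, and that one does admit a short proof (from $\G(a^g_n,b)>g$ there is a move to a value-$g$ position; a move in the second coordinate would force $a^g_n$ to be an earlier second coordinate, contradicting Step (1), so the move is in the first coordinate and lands on some $(a^g_j,b)$ with $j<n$, whence $b=b^g_j$). You supply neither the restriction nor the argument, so the inner induction does not close.

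For contrast, the paper avoids both difficulties by arguing in the opposite direction: rather than showing the algorithm reproduces $S^g$ term by term, it shows the output set $T^g$ satisfies the three conditions that characterise $S^g$ via Lemma \ref{g-positions} (disjointness from the lower levels, no move within $T^g$, and a move into $T^g$ from every position outside $\cup_{k\le g}T^k$). The covering property it needs is the covering of all integers by the coordinates of $T^g$, which holds \emph{automatically} by the $\mex$ in Step (1) and costs nothing to prove; the delicate minimality of $z$ enters only through the observation that if $(x^g_{i_0},y)\notin\cup_{l\le g}T^l$ and $y<y^g_{i_0}$ then $y$ must already be some $y^g_{j_0}$ with $j_0<i_0$, since otherwise Step (2) would have chosen a smaller $z$. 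If you want to keep your entry-by-entry scheme, you should import Proposition \ref{SN.2.bound} to get the covering of $S^g$, restate your Step (2) lemma for the range $[a^g_n,b^g_n)$, and write out the two missing arguments; otherwise the paper's characterisation-based induction is the more economical path. You also omit the (easy but necessary) check that the algorithm terminates, i.e.\ that the search for $z$ in Step (2) is finite.
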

\begin{proof}
We first verify the termination of the algorithm. Step (1) requires at most $(2n)^2$ comparisons. For each $z$ in Step (2),
$x^g_n + z$ needs to be compared with no more than $n$ values of $y^g_i$ and $(x^g_n,y^g_n)$ needs to be compared with no
more than $g(x^g_n+1)$ positions $(x^k_m,y^k_m)$ of $T^k$ with $k < g$ and $m \leq x^g_n$. We can ignore all other
$(x^k_m,y^k_m)$ with $m > x^g_n$ since $x^k_m \geq m > x^g_n$ by Remark \ref{R.ij}. Now $z \leq 1 + \max\{y^g_i, y^k_m \mid i
< n, k < g, m \leq x^g_n\}$. Therefore, the algorithm must terminate.

We prove by induction on $g$ that the sequence $T^g$ computed by Algorithm \ref{alg1} coincides with the $g$-sequence $S^g$.
We need to verify the following conditions:

\rm{(i)} $T^g \cap (\cup_{k < g} T^k) = \emptyset$;

\rm{(ii)} there is no move between two positions in $T^g$;

\rm{(iii)} from every position not in $\cup_{k \leq g} T^k$, there exists one move that terminates in $T^g$.

The condition \rm{(i)} holds since in the step (2) of the algorithm, $(x^g_n,y^g_n)$ is chosen such that it has not appeared
in any sequence $T^k$ for $k < g$.

For \rm{(ii)}, note that if $(x,y)$ and $(x',y')$ belong to $T^g$ then $x \neq x'$ by Step (1) and $y \neq y'$ by Step (2).
Thus, \rm{(ii)} holds.

For \rm{(iii)}, let $(x,y) \notin \cup_{l \leq g} T^l$ with $x \leq y$. We show that there is a move from $(x,y)$ to some
position in $T^g$. Step (1) implies that the set $\{x^g_i, y^g_i \mid i \geq 0\}$ contains all nonnegative integers. It
follows that there exists some index $i_0$ such that either $x = x^g_{i_0}$ or $x = y^g_{i_0}$.

If the latter case holds, we have $y > x^g_{i_0}$ since $y \geq x = y^g_{i_0} \geq x^g_{i_0}$ and $(x,y) \neq
(x^g_{i_0},y^g_{i_0})$. One can move the token in the square $y$ to the square $x^g_{i_0}$, terminating in $T^g$.

If the former case holds, we compare $y$ with $y^g_{i_0}$. Note that $y \neq y^g_{i_0}$. If $y > y^g_{i_0}$, one can move the
token in the square $y$ to the square $y^g_{i_0}$, terminating in $T^g$. We now consider the case $y < y^g_{i_0}$.

We claim that $y \in \{y^g_i \mid i < i_0\}$. Since $(x_{i_0}^g,y) = (x,y) \notin \cup_{l \leq g} T^l$, if $y \notin \{y^g_i
\mid i < i_0\}$ then Step (2) must choose $y^g_{i_0} \leq y$, giving a contradiction.

Thus, there exists $j_0 < i_0$ such that $(x^g_{j_0}, y^g_{j_0}) \in T^g$ and $y^g_{j_0} = y$. Also note that $j_0 < i_0$
implies $x^g_{j_0} < x^g_{i_0}$ by Remark \ref{R.ij}. Thus, moving the token from the square $x = x^g_{i_0}$ to the square
$x^g_{j_0}$ results in the position $(x^g_{j_0}, y^g_{j_0}) \in T^g$. 
\end{proof}

\begin{example} \label{Ex.alg1}
We illustrate data required to run Algorithm \ref{alg1} in Figure \ref{F.alg1}. To calculate $(x^3_5,y^3_5)$, step (1) gives
$x^3_5 = 10$. In step (2), we start with $z = 0$, giving $x^3_5 + z = 10$ satisfying (2a). But $(10,10)$ appears in $T^0$ so
step (2b) fails. By increasing $z$ to $1$ we have $(10,11)$ appearing in $T^2$, so step (2b) fails. Then we increase $z$ to
$2$, giving $(10,12)$ and satisfying step (2). Therefore, $y^3_5 = 12$. \qed
\end{example}

\bigskip

\begin{figure}[ht]
\begin{center}
\begin{tikzpicture}

\node at (0,-1) {$T^0$};
\node at (0,0) {(0,1)};
\node at (0,1) {(2,2)};
\node at (0,2) {(3,3)};
\node at (0,3) {(4,4)};
\node at (0,4) {(5,5)};
\node at (0,5) {(6,6)};
\node at (0,6) {(7,7)};
\node at (0,7) {(8,8)};
\node at (0,8) {(9,9)};
\node at (0,9) {(10,10)};

\node at (2,-1) {$T^1$};
\node at (2,0) {(0,2)};
\node at (2,1) {(1,1)};
\node at (2,2) {(3,4)};
\node at (2,3) {(5,6)};
\node at (2,4) {(7,8)};
\node at (2,5) {(9,10)};
\node at (2,6) {(11,12)};

\node at (4,-1) {$T^2$};
\node at (4,0) {(0,3)};
\node at (4,1) {(1,2)};
\node at (4,2) {(4,5)};
\node at (4,3) {(6,7)};
\node at (4,4) {(8,9)};
\node at (4,5) {(10,11)};

\node at (6,-1) {$T^3$};
\node at (6,0) {(0,4)};
\node at (6,1) {(1,3)};
\node at (6,2) {(2,5)};
\node at (6,3) {(6,8)};
\node at (6,4) {(7,9)};
\draw [black, fill=cyan] (5.3,5.5) -- (6.7,5.5) -- (6.7,4.7) -- (5.3,4.7) -- (5.3,5.5);
\node at (6,5) {(10,12)};

\end{tikzpicture}
\caption{Finding $(x^3_5,y^3_5) = (10,12)$ using Algorithm \ref{alg1}, including sufficient data required.} \label{F.alg1}
\end{center}
\end{figure}


We now improve Algorithm \ref{alg1}. Note that to compute $(x^g_n, y^g_n)$ in the sequence $T^g$, Algorithm \ref{alg1}
computes a set whose size is $\Theta(n)$. 
We will show that there are several ways we can improve Algorithm \ref{alg1}. In step (1), we can replace the existing set by
a set of size at most $g+2$. In step (2a), we can replace the existing set by a set of size at most $g+1$. In step (2b),
instead of scanning all pairs in $T^k$, we only need to scan one pair. Thus, the algorithm needs to scan only \rm{O}$(g^2)$
values.

\begin{lemma} \label{L12}
$\G(0,b) = b-1$, $\G(1,b) = b$, $\G(2,2) = 0$, and for $b \geq 3$
\[\G(2,b) =
\begin{cases}
b+1, \quad \hbox{ if $b \equiv 0, 1 \pmod{3}$}; \\
b-2, \quad \hbox{ otherwise}.
\end{cases}
\]
\end{lemma}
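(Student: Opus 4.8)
The plan is to compute each value directly from the recursive $\mex$ definition, using the move structure of {\sc 2-Star Nim} and induction where needed. First I would record that from a position $(a,b)$ with $a\le b$ one may slide the first token to any $(a',b)$ with $a'<a$, or the second token to any $(a,b')$ with $b'<b$, the only forbidden configuration being $(0,0)$ (two tokens cannot share square $0$). In particular $(0,1)$ is the unique terminal position, so $\G(0,1)=0$.

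The three easy identities come first. For $\G(0,b)$ the only moves are $(0,b)\to(0,b')$ with $1\le b'<b$, since the second token cannot reach the occupied square $0$; an immediate induction gives $\G(0,b)=\mex\{0,1,\dots,b-2\}=b-1$. For $\G(1,b)$ the reachable positions are $(0,b)$, of value $b-1$, together with $(1,b')$ for $0\le b'<b$; since $\G(1,0)=\G(0,1)=0$ and, inductively, $\G(1,b')=b'$ for $1\le b'<b$, the reachable values are exactly $\{0,1,\dots,b-1\}$, whence $\G(1,b)=b$. The value $\G(2,2)=0$ is part of Proposition \ref{SN.P.2}.

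The substance is the formula for $\G(2,b)$ with $b\ge 3$, which I would prove by strong induction on $b$. From $(2,b)$ the first token yields $(1,b)$ and $(0,b)$, of values $b$ and $b-1$, while the second token yields $(2,b')$ for $0\le b'<b$; the cases $b'=0,1,2$ give $(0,2),(1,2),(2,2)$ of values $1,2,0$, and the cases $3\le b'\le b-1$ contribute the values $\G(2,b')$ supplied by the induction hypothesis. Thus
\[
\G(2,b)=\mex\Big(\{0,1,2\}\cup\{b-1,b\}\cup\{\G(2,b'):3\le b'\le b-1\}\Big).
\]
The key observation that makes this $\mex$ transparent is that, on the integers $\ge 3$, the map $b'\mapsto\G(2,b')$ permutes each block $\{3k,3k+1,3k+2\}$ onto itself: by the claimed formula the three values are $3k+1,\,3k+2,\,3k$ respectively. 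Consequently $\{\G(2,b'):3\le b'\le b-1\}$ is an initial segment $\{3,\dots,3k-1\}$ of the integers $\ge 3$, together with at most the one or two values contributed by the incomplete current block, and its union with $\{0,1,2\}$ and $\{b-1,b\}$ is easily pinned down.

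Finally I would split into the three residue classes and write $b=3k,\,3k+1,\,3k+2$. For $b\equiv 0,1\pmod 3$ the assembled set is $\{0,1,\dots,b\}$, giving $\mex=b+1$; for $b\equiv 2\pmod 3$ it is $\{0,1,\dots,b-3\}\cup\{b-1,b\}$, which misses exactly $b-2$, giving $\mex=b-2$. The main obstacle is not any single computation but keeping the bookkeeping of these three cases correct and checking the small base cases $b=3,4$ (where $\{\G(2,b'):3\le b'\le b-1\}$ is empty or a singleton) against the block-permutation description; once the permutation property is established, each $\mex$ evaluation is routine.
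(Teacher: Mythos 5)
Your proof is correct and takes the same route the paper indicates: the paper's entire proof of this lemma is the one-line remark ``can be proved by induction on $b$,'' and your argument is exactly that induction carried out in full, with the block-permutation observation for $\{3k,3k+1,3k+2\}$ supplying the bookkeeping the paper leaves to the reader. All the case computations check out against Table \ref{T1}.
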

\begin{proof}
The lemma can be proved by induction on $b$.
\end{proof}


We first prove lower and upper bounds for the Sprague-Grundy function.

\begin{proposition} \label{SN.2.bound}
For {\sc $2$-Star Nim}, we have $b-a \leq \G(a,b) \leq b+a-1$ for positive integers $a, b$.
\end{proposition}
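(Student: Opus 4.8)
The plan is to treat the two inequalities separately, after two reductions. Since the two strips are interchangeable, $\G$ is symmetric, $\G(a,b)=\G(b,a)$, so I may assume $a\le b$. Moreover, as $a,b\ge 1$ neither token starts on square $0$, so each of the $a+b$ moves (sending the first token to any $a'<a$, or the second to any $b'<b$) is legal and lands on a distinct position; hence $\G(a,b)=\mex\big(\{\G(a',b):0\le a'<a\}\cup\{\G(a,b'):0\le b'<b\}\big)$. I will call the first set the \emph{first-token options} and the second the \emph{second-token options}, and I will use the exact values $\G(0,c)=c-1$ and $\G(1,c)=c$ from Lemma \ref{L12} as base data. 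The upper bound is then the easy half, by induction on $a+b$: for an option $(a',b)$ with $a'\le a-1$ the inductive hypothesis (or $\G(0,b)=b-1$ when $a'=0$) gives value at most $a+b-2$, and symmetrically for the second-token options, so no option has value $a+b-1$ and the mex is at most $a+b-1$.

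The lower bound is the crux. The naive attempt, namely to realise each target $t\in\{0,\dots,b-a-1\}$ by some option, stalls: the two bounds together with the distinctness of Grundy values along a fixed row leave a factor-of-two gap and never pin down the single top value $t=b-a-1$, whose exact location in the row is not controlled by inequalities alone. Instead I would prove the sharper \emph{placement statement}: for every $a\ge 1$ and every $v\ge 0$, the value $v$ occurs in row $a$, i.e.\ $\G(a,c)=v$, at some position $c\le a+v$. Granting this, each $u\in\{0,\dots,b-a-1\}$ occurs at a position $\le a+u\le b-1$, so all of $0,\dots,b-a-1$ appear among the second-token options of $(a,b)$, whence $\G(a,b)\ge b-a$; the case $a>b$ then follows by symmetry.

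I would prove the placement statement by a double induction: an outer induction on the row index $a$ (base row $a=1$ supplied by Lemma \ref{L12}), and, inside the step for a fixed $a\ge 2$, an inner induction on the value $v$ (base $v=0$ from $\G(a,a)=0$, Proposition \ref{SN.P.2}). For the inner step, suppose $v$ has not yet occurred at any position $\le a+v-1$, and evaluate $\G(a,a+v)$ from the mex recurrence. Every first-token option $\G(a',a+v)$ with $a'<a$ exceeds $v$: for $a'\ge 1$ this is the outer hypothesis $\G(a',a+v)\ge (a+v)-a'\ge v+1$, and for $a'=0$ it is $\G(0,a+v)=a+v-1\ge v+1$ (this is where $a\ge 2$ is used). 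The second-token options $\{\G(a,c'):c'<a+v\}$ contain $0,\dots,v-1$ by the inner hypothesis (each such value lies at a position $\le a+v-1$) but do not contain $v$ by the supposition. Hence $\G(a,a+v)=v$, which places $v$ at position $a+v$, contradicting the supposition and completing the step.

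The main obstacle is exactly this lower bound, and the idea that makes it work is the division of labour in the double induction: the outer induction on $a$ forces the first-token options at $(a,a+v)$ to be large, while the inner induction on $v$ guarantees that the second-token options supply precisely $\{0,\dots,v-1\}$. Together these pin the mex to $v$ and deposit the new value at the controlled position $a+v$. I expect the bookkeeping of the base cases (small $a$, and the $a'=0$ term, both handled by Lemma \ref{L12} and Proposition \ref{SN.P.2}) to be the only delicate part beyond this central argument; notably, the upper bound is not needed in the proof of the lower bound.
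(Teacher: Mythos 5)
Your proof is correct, and although both arguments ultimately pivot on the same auxiliary position $(a,a+v)$, your treatment of the lower bound takes a genuinely different route from the paper's. The paper argues by minimal counterexample: choosing the least $a$, then the least $b$, with $\G(a,b)=v<b-a$, it deduces $\G(a,a+v)>v$ (using the minimality of $b$ and the fact that an option cannot share its parent's Grundy value), and then obtains a contradiction from whichever option of $(a,a+v)$ realises the value $v$ --- an option in the same row would also be an option of $(a,b)$ with value $v$, while an option in a lower row violates the minimality of $a$. You instead prove the stronger, positive ``placement'' statement that every value $v$ occurs in row $a$ at some column $\le a+v$, by a double induction that computes $\G(a,a+v)=v$ outright when $v$ has not yet appeared; the ingredients are the same (the bound on lower rows pushes the first-token options of $(a,a+v)$ above $v$, and the earlier entries of row $a$ supply $0,\dots,v-1$), but your organisation buys more: the placement lemma is essentially the existence half of Proposition~\ref{agb}, which the paper proves separately later, so you get that for free at the cost of a somewhat longer argument. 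Two cosmetic points: your closing ``contradicting the supposition'' is really just the second branch of a case split (either $v$ already occurs at some $c\le a+v-1$, or else $\G(a,a+v)=v$ places it at column $a+v$), and your upper-bound induction on $a+b$ agrees in substance with the paper's induction on $a$, whose details are left to the reader.
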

\begin{proof}
We first show that $\G(a,b) \leq b+a-1$. The claim can be proved by induction on $a$. Note that the claim holds for $a \leq
2$ by Lemma \ref{L12}. The remaining argument is straightforward and we leave it to the reader.

holds for all $a \leq n$ for some $n \geq 2$. We show that the theorem holds for $a = n+1$. Clearly the theorem holds for $b
\leq a + 2$. Assume that the theorem holds for $b \leq m$ for some $m \geq a+2$. We show that the theorem holds for $a=n+1$,
$b = m+1$. We have
that $\G(a,b) \leq a+b-2$, as required. By the principle of induction, $\G(a,b) \leq b+a-1$ for all $a, b$.

We next prove that $\G(a,b) \geq b-a$. Assume by way of contradiction that $\G(a,b) < b-a$ for some $a$ and $b$ with $a \leq
b$. We can choose the smallest such integer $a$. Moreover, corresponding to this $a$, we can choose the smallest such integer
$b$. Assume that $\G(a,b) = v$ for some $v < b-a$. Consider the position $(a,a+v)$. Since $b$ is the smallest integer such
that $\G(a,b) < b-a$ and $a+v < b$, we have $\G(a,a+v) \geq (a+v)-a = v$. Since one can move from $(a,b)$ to $(a,a+v)$,
$\G(a,a+v)\neq v$ and so $\G(a,a+v) > v$. Therefore, there exists some position $(c,d)$ with $c \leq d$ such that one can
move from $(a,a+v)$ to $(c,d)$ and $\G(c,d) = v$. We have either $c = a$ and $d < a+v$ or $c < a$ and $d = a+v$.

The former case cannot hold as $\G(a,b) = v = \G(c,d)$ and there exists a move from $(a,b)$ to $(c,d)$ (moving the token on
the square $b$ to the square $d$). Therefore, the latter case holds. By the inductive hypothesis on $c < a$, we have $\G(c,e)
\geq e-c$ for all $e$ and so $\G(c,d) \geq d-c = a + v - c > v$, giving a contradiction.
\end{proof}

\medskip


We introduce two concepts that we will use to improve Algorithm \ref{alg1} by reducing the amount of data used. In Step 1 of
Algorithm \ref{alg1}, we can skip all values smaller than $x^g_{n-1}$, then count from $x^g_{n-1}$ to find the smallest
integer not in the set. Similarly, in step (2a), we can start comparing with those values not smaller than $x^g_{n-1}$ rather
than all $y^g_l$. This results in the following concept.

\begin{definition} \label{D.mex.min}
Let $S$ be a finite set of nonnegative integers and let $a$ be a nonnegative integer. Set $\mex_{a\Uparrow}(S) = \min\{b\mid
b \geq a, b \notin S\}$.    \qed
\end{definition}
For example, if $S = \{3,4,5,7,9\}$ then $\mex_{2\Uparrow}(S) = 2$, $\mex_{3\Uparrow}(S) = 6$.

\medskip


In step (2b), there is at most one value $x^k_l$ equal to $x^g_n$. We know the first entries in each $T^k$ are increasing.
Suppose we have already scanned the first $m$ pairs in $T^k$ whose first entries are smaller than $x^g_{n-1}$ to compute
$y^g_{n-1}$. Then when computing $x^g_n$, we can skip these scanned values. We would like to mark a position to record that
we have already scanned up to there. The following concept helps us to do so.

\begin{definition} \label{D.f.v}
For each $v$, there exists some $x^k_f$ in column $T^k$ such that $x^k_f \leq v < x^k_{f+1}$. For convenience, denote by
$f_k(v)$ this index $f$. The existence of $f_k(v)$ is straightforward since $x^k_i < x^k_{i+1}$ by Remark \ref{R.ij}. \qed
\end{definition}

For example, one can check some first elements of $T^k$ for $k \leq 2$ as follows:
\begin{align*}
T^0 & = ((0,1),(2,2),(3,3),(4,4),(5,5),(6,6),(7,7), \ldots), \\
T^1 & = ((0,2),(1,1),(3,4),(5,6),(7,8),(9,10),(11,12), \ldots),\\
T^2 & = ((0,3),(1,2),(4,5),(6,7),(8,9),(10,11),(12,13), \ldots).
\end{align*}
Let $v = 6$. Then $f_0(6) = 5$ (since $x^0_5 = 6$, $x^0_6 = 7$), $f_1(6) = 3$ (since $x^1_3 = 5$, $x^1_4 = 7$), and $f_2(6) =
3$ (since $x^2_3 = 6$, $x^2_4 = 8$).

\medskip

We now show that the sets scanned in Algorithm \ref{alg1} can be replaced by new sets of smaller sizes.

\begin{lemma} \label{L.alg1}
The following properties hold for Algorithm \ref{alg1}:
\begin{enumerate} \itemsep0em
\item [\rm{(1)}] $x^g_n  = \mex_{{x^g_{n-1}}\Uparrow} \{ x^g_{n-1}, y^g_{n-1-i}\mid 0 \leq i \leq g,  \ \  y^g_{n-1-i} \geq
    x^g_{n-1}\}$;
\item [\rm{(2)}] $z$ is the smallest nonnegative integer such that
    \begin{enumerate}
    \item [\rm{(a)}] $x^g_n + z \notin \{y^g_{n-1-i}\mid 0 \leq i \leq g, \ \  y^g_{n-1-i} \geq x^g_{n-1}\}$, and
    \item [\rm{(b)}] $(x^g_n,x^g_n + z) \neq (x^k_{f_k(x^g_n)},y^k_{f_k(x^g_n)})$.
    \end{enumerate}
\end{enumerate}
\end{lemma}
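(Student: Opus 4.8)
The plan is to derive each of the three claimed simplifications from two structural facts already available: that the first entries $x^g_i$ are strictly increasing (Remark \ref{R.ij}), and that every position $(a,b)$ of Sprague-Grundy value $g$ satisfies $b-a \le g$ (Proposition \ref{SN.2.bound}). Throughout I treat Algorithm \ref{alg1} and the identity $T^g=S^g$ from the preceding theorem as given, so that the pairs $(x^g_i,y^g_i)$ really are the $g$-valued positions and I may freely use the defining recursion $x^g_m = \mex\{x^g_i,y^g_i\mid i<m\}$.

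The heart of the argument will be a single bounding observation: among the indices $j<n$, only those with $j\ge n-1-g$ can satisfy $y^g_j\ge x^g_{n-1}$. To prove it I would take $j\le n-2-g$ and note that, since consecutive first entries increase by at least $1$, we get $x^g_{n-1}\ge x^g_j+(n-1-j)\ge x^g_j+(g+1)$, whereas Proposition \ref{SN.2.bound} gives $y^g_j\le x^g_j+g$; combining yields $y^g_j\le x^g_j+g< x^g_j+(g+1)\le x^g_{n-1}$. Contrapositively, $y^g_j\ge x^g_{n-1}$ forces $j=n-1-i$ with $0\le i\le g$, which is exactly the index window in the statement (negative indices being vacuously absent when $n\le g$).

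With this observation, part (1) will follow quickly. Applying the defining recursion at index $n-1$ shows that $x^g_{n-1}$ is itself a $\mex$, so every integer in $\{0,1,\ldots,x^g_{n-1}-1\}$ already occurs among $\{x^g_i,y^g_i\mid i<n\}$; hence the true $\mex$ defining $x^g_n$ is at least $x^g_{n-1}$, and recomputing it as a $\mex_{x^g_{n-1}\Uparrow}$ (Definition \ref{D.mex.min}) discards nothing below $x^g_{n-1}$. Among the elements that are $\ge x^g_{n-1}$, the only surviving $x$-entry is $x^g_{n-1}$ (the earlier ones are strictly smaller) and, by the bounding observation, the only surviving $y$-entries are the $y^g_{n-1-i}$ with $0\le i\le g$ and $y^g_{n-1-i}\ge x^g_{n-1}$, which is precisely the restricted set claimed. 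Part (2a) I would handle identically: since $x^g_n+z\ge x^g_n>x^g_{n-1}$, every $y^g_l<x^g_{n-1}$ is automatically avoided, so only the same restricted family of $y$-values can obstruct the choice of $z$.

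For part (2b) I would fix $k<g$ and use strict monotonicity of the first entries $x^k_0<x^k_1<\cdots$ of $T^k$: at most one index $l$ has $x^k_l=x^g_n$, and if it exists it must equal $f_k(x^g_n)$ by Definition \ref{D.f.v}, since $x^k_{f_k(x^g_n)}\le x^g_n<x^k_{f_k(x^g_n)+1}$. Consequently $(x^g_n,x^g_n+z)\in T^k$ if and only if $(x^k_{f_k(x^g_n)},y^k_{f_k(x^g_n)})=(x^g_n,x^g_n+z)$: when $x^k_{f_k(x^g_n)}\neq x^g_n$ the first coordinates already disagree, and otherwise this one pair is the only candidate match. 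Comparing against this single pair therefore reproduces the original condition (2b). I expect the bounding observation to be the only genuinely non-routine step, its force being that Proposition \ref{SN.2.bound} confines $y^g_j$ to within $g$ of $x^g_j$, so that the strictly increasing $x$-entries overtake every ``old'' $y$-value past the last $g+1$ indices; everything else is bookkeeping with $\mex$ and the index function $f_k$.
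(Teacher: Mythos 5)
Your proposal is correct and follows essentially the same route as the paper: the key step in both is the bound $y^g_j \le x^g_j + g$ from Proposition \ref{SN.2.bound} combined with the strict increase of the $x^g_i$ to show that only the last $g+1$ second entries can reach $x^g_{n-1}$, and part (2b) is handled in both by the monotonicity of first entries in $T^k$ via $f_k$. No substantive differences.
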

\begin{proof}
\item \rm{(1)} By Remark \ref{R.ij}, $x^g_i < x^g_n$ if $i < n$ and moreover $a \in \{ x^g_i, y^g_i \mid i < n\}$ if $a \leq
    x^g_{n-1}$. Therefore,
    $$x^g_n = \mex\{ x^g_i, y^g_i \mid i < n\} = \mex_{x^g_{n-1}\Uparrow}\{ x^g_i, y^g_i \mid i < n\}.$$
    In the set $\{ x^g_i, y^g_i \mid i < n\}$ on the right-hand side of the last equation, we can exclude all members smaller
    than $x^g_{n-1}$. We now do this task.
    \begin{enumerate} \itemsep0em
    \item [\rm{(i)}]  By Remark \ref{R.ij}, $x^g_i < x^g_{n-1}$ for $i < n-1$ and so we can exclude these $x^g_i$.
    \item [\rm{(ii)}] For $i > g$, by Remark \ref{R.ij}, $x^g_{n-1-i} \leq x^g_{n-1}-i < x^g_{n-1} - g$. Combining this and
        Proposition \ref{SN.2.bound} gives $y^g_{n-1-i} \leq x^g_{n-1-i} + g < x^g_{n-1}$. Thus, we can exclude all
        $y^g_{n-1-i}$ with $i > g$.
        Finally, we exclude those $y^g_{n-1-i} < x^g_{n-1}$ for $i \leq g$.
    \end{enumerate}
     Therefore, (1) holds.
\item \rm{(2)} By Remark \ref{R.ij}, $x^g_{n-1} < x^g_n \leq x^g_n + z$. For $i > g$, it has been shown in \rm{(ii)} above
    that $y^g_{n-1-i} < x^g_{n-1}$ and so $y^g_{n-1-i} < x^g_n + z$. Therefore, in Step (2a) of Algorithm \ref{alg1}, $x^g_n
    + z \notin \{y^g_l\mid l \leq n\}$ if and only if $x^g_n + z \notin \{y^g_{n-1-i}\mid 0 \leq i \leq g\}$.

\medskip

    For the condition \rm{(2b)} in Algorithm \ref{alg2}, we need to compare $(x^g_n,x^g_n + z)$ with only
    $(x^k_{f_k(x^g_n)},y^k_{f_k(x^g_n)})$. This is because $x^k_{f_k(x^g_n) - i} < x^k_{f_k(x^g_n)} \leq x^g_n <
    x^k_{f_k(x^g_n)+j}$ for all $i, j \geq 1$.
\end{proof}

\medskip

We have following bounds for $x^g_n$.

\begin{corollary} \label{SN.2.bound.alg}
For given $n, g$, we have $x^g_n \leq x^g_{n-1} + g + 2$. 
\end{corollary}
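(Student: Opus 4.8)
The goal is to prove Corollary \ref{SN.2.bound.alg}, namely that $x^g_n \leq x^g_{n-1} + g + 2$. The plan is to read this directly off the improved formula for $x^g_n$ established in Lemma \ref{L.alg1}(1), rather than from the weaker bound $x^g_n \leq x^g_{n-1} + n + 1$ noted in Remark \ref{R.ij}(2). The key observation is that Lemma \ref{L.alg1}(1) expresses $x^g_n$ as a $\mex$ computed not over all previously used integers, but only over a finite set of size bounded independently of $n$.

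\medskip

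First I would recall the statement of Lemma \ref{L.alg1}(1):
\[
x^g_n = \mex_{{x^g_{n-1}}\Uparrow}\{ x^g_{n-1}, y^g_{n-1-i}\mid 0 \leq i \leq g,\ y^g_{n-1-i} \geq x^g_{n-1}\}.
\]
The set inside the $\mex_{x^g_{n-1}\Uparrow}$ consists of the single value $x^g_{n-1}$ together with at most $g+1$ values among $y^g_{n-1}, y^g_{n-2}, \ldots, y^g_{n-1-g}$ (one for each $i$ with $0 \leq i \leq g$). Hence this set $S$ contains at most $g+2$ elements in total. By Definition \ref{D.mex.min}, $\mex_{x^g_{n-1}\Uparrow}(S)$ is the least integer $b \geq x^g_{n-1}$ with $b \notin S$. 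Among the $g+3$ consecutive integers $x^g_{n-1}, x^g_{n-1}+1, \ldots, x^g_{n-1}+g+2$, at most $g+2$ can lie in $S$, so at least one of them is absent from $S$. Therefore the smallest such absent integer $b$ satisfies $b \leq x^g_{n-1} + g + 2$, which gives $x^g_n \leq x^g_{n-1} + g + 2$.

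\medskip

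The only point requiring a little care is the exact cardinality count: the index $i$ ranges over $0 \leq i \leq g$, so there are $g+1$ candidate values $y^g_{n-1-i}$, and adjoining $x^g_{n-1}$ yields at most $g+2$ distinct elements. Since a $\mex$ taken upward from $x^g_{n-1}$ over a set of at most $g+2$ elements cannot skip past more than $g+2$ positions, the bound is immediate. I do not anticipate a genuine obstacle here; this is essentially a pigeonhole argument, and the substantive work was already done in proving Lemma \ref{L.alg1}. The one thing I would double-check is that the case $n-1-i < 0$ (when $n \leq g$) causes no trouble: in that regime there are simply fewer than $g+1$ valid indices, so $S$ is even smaller and the bound still holds a fortiori.
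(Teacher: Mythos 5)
Your proof is correct and follows essentially the same route as the paper: both derive the bound from Lemma \ref{L.alg1}(1), observing that the set inside the $\mex_{x^g_{n-1}\Uparrow}$ has at most $g+2$ elements ($x^g_{n-1}$ plus at most $g+1$ of the $y^g_{n-1-i}$), so the upward $\mex$ cannot exceed $x^g_{n-1}+g+2$. Your write-up is in fact slightly more careful than the paper's, which leaves the final pigeonhole step implicit.
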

\begin{proof}
Therefore, $n \leq x^g_n$. Also in step 1, the set $\{ x^g_i, y^g_i \mid i < n\}$ has at most $2n$ elements while the set
$\{i \mid 0 \leq i \leq 2n\}$ has $2n+1$ elements. Therefore, $\mex \{ x^g_i, y^g_i \mid i < n\} \leq 2n$.
In Lemma \ref{L.alg1}, the set $\{ x^g_{n-1}, y^g_{n-1-i}\mid 0 \leq i \leq g,  \ \  y^g_{n-1-i} \geq x^g_{n-1}\}$ has at
most $g+1$ values different to $x^g_{n-1}$. Therefore, $x^g_n \leq x^g_{n-1} + g + 2$.
reaches the minimum $n$, giving the gap $x^g_n - x^g_{n-1} = n+1$.
\end{proof}

We now improve Algorithm \ref{alg1} so that we need only \rm{O}$(g^2)$ values for input.

\begin{definition} \label{Def.T.Y}
For all $k < g$. Set\\
$\ \ \ \ \ \ \ \ T^g_{n-1} = \{x^g_{n-1},y^g_{n-1}\}$,\\
$\quad \quad \quad \quad Y^g_{n-1} = \{y^g_{n-1-i} \mid 0 \leq i \leq g, \ \  y^g_{n-1-i} \geq x^g_{n-1}\}$,\\
$\quad \quad \quad \quad T^k_{n-1} = \{x^k_{f_k(x^g_{n-1})},y^k_{f_k(x^g_{n-1})}\}$,\\
$\quad \quad \quad \quad Y^k_{n-1} = \{y^k_{f_k(x^g_{n-1})-i} \mid 0 \leq i \leq k, \ \  y^k_{f_k(x^g_{n-1})-i} \geq
x^k_{f_k(x^g_{n-1})}\}$. \qed
\end{definition}

\begin{algorithm} \label{alg2} \

\smallskip
\noindent
Input: $g, n$.

\noindent
Previously computed: $(x^g_{n-1},y^g_{n-1}), T^i_{n-1}, Y^i_{n-1} \ \ \ (0 \leq i \leq g)$.
\begin{enumerate}  \itemsep0em
\item $x^g_{n} \leftarrow \mex_{x^g_{n-1}\Uparrow}(T^g_{n-1} \cup Y^g_{n-1})$.
\item $y^g_{n} \leftarrow x^g_{n} + z$, where $z$ is the smallest nonnegative integer such that
    \begin{enumerate}
    \item $x^g_{n} + z \notin Y^g_{n-1}$, and
    \item $(x^g_{n},x^g_{n} + z) \neq \big(\min(T^k_{n}), \max(T^k_{n})\big)$ for $k < g$.
    \end{enumerate}
\item $T^g_n = \{x^g_n, y^g_n\}, Y^g_n = (Y^g_{n-1} \setminus\{z \in Y^g_{n-1} \mid z < x^g_n\}) \cup \{y^g_{n}\}$.
\end{enumerate}

\smallskip
\noindent
Output: $(x^g_n, y^g_n), T^i_n, Y^i_n$ with $0 \leq i \leq g$. \qed
\end{algorithm}

\medskip

The next result follows from Lemma \ref{L.alg1}.

\begin{lemma}  \label{L.alg2}
Algorithm \ref{alg2} computes $x^g_{n}, y^g_{n}$, and $T^i_n, Y^i_n$ with $0 \leq i \leq g$.
\end{lemma}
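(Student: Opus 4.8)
The plan is to show that Algorithm \ref{alg2} is merely a reformulation of Algorithm \ref{alg1} in which the large sets scanned at each step are replaced by the smaller sets recorded in the bookkeeping data $T^i_n, Y^i_n$. The heart of the matter is already contained in Lemma \ref{L.alg1}: that lemma rewrites the $\mex$ in Step (1) and the condition in Step (2) of Algorithm \ref{alg1} in terms of exactly the quantities that Algorithm \ref{alg2} maintains. So I would first argue that, \emph{provided} the auxiliary sets $T^i_{n-1}$ and $Y^i_{n-1}$ have been correctly maintained, Steps (1) and (2) of Algorithm \ref{alg2} produce the same $x^g_n$ and $y^g_n$ as Algorithm \ref{alg1}. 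Concretely, Step (1) of Algorithm \ref{alg2} computes $\mex_{x^g_{n-1}\Uparrow}(T^g_{n-1}\cup Y^g_{n-1})$, and by Lemma \ref{L.alg1}(1) this equals $x^g_n$. Likewise Step (2a) uses $Y^g_{n-1}$, which by Lemma \ref{L.alg1}(2) is exactly the set one must avoid, and Step (2b) compares $(x^g_n, x^g_n+z)$ against $\big(\min(T^k_n),\max(T^k_n)\big) = (x^k_{f_k(x^g_n)}, y^k_{f_k(x^g_n)})$, which by Lemma \ref{L.alg1}(2b) is the unique pair in $T^k$ that could coincide with it. Thus the outputs $x^g_n, y^g_n$ agree with those of Algorithm \ref{alg1}.

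The real work, and the step I expect to be the main obstacle, is verifying that the \emph{update} rules in Step (3) of Algorithm \ref{alg2} correctly produce $T^i_n$ and $Y^i_n$ from $T^i_{n-1}$ and $Y^i_{n-1}$, so that the induction on $n$ closes. This is an invariant-maintenance argument rather than a single computation. For $T^g_n = \{x^g_n, y^g_n\}$ the update is immediate from Definition \ref{Def.T.Y} once $x^g_n, y^g_n$ are known. For $Y^g_n$ I would check that the prescribed update $Y^g_n = (Y^g_{n-1}\setminus\{z\in Y^g_{n-1}\mid z<x^g_n\})\cup\{y^g_n\}$ exactly matches the defining set $\{y^g_{n-i}\mid 0\le i\le g,\ y^g_{n-i}\ge x^g_n\}$. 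The point is that advancing the index from $n-1$ to $n$ drops the oldest second-entry $y^g_{n-1-g}$ (which Lemma \ref{L.alg1}(ii) shows falls below $x^g_{n-1}\le x^g_n$ and is therefore removed by the threshold filter) and adjoins the new $y^g_n$, while the remaining elements are precisely those $y^g_{n-1-i}$ with $0\le i\le g-1$ that still satisfy $y^g_{n-1-i}\ge x^g_n$; the monotonicity of $x^g$ from Remark \ref{R.ij} and the bound $\G(a,b)\le a+b-1$ from Proposition \ref{SN.2.bound} keep the window of relevant indices of width at most $g+1$.

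The subtler part of the invariant is the maintenance of $T^k_n$ and $Y^k_n$ for $k<g$, since here the relevant index $f_k(x^g_n)$ advances by an amount depending on how far $x^g_n$ has moved past $x^g_{n-1}$. I would use Corollary \ref{SN.2.bound.alg}, which bounds $x^g_n - x^g_{n-1}\le g+2$, to argue that $f_k(x^g_n) - f_k(x^g_{n-1})$ is bounded, so that each $T^k$ need only be advanced by a controlled number of pairs; the definition $f_k(v)$ from Definition \ref{D.f.v} guarantees $x^k_{f_k(x^g_n)}\le x^g_n < x^k_{f_k(x^g_n)+1}$, which is exactly what Step (2b) requires. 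Once this advancing is described, $T^k_n$ and $Y^k_n$ are read off from their definitions at the new index $f_k(x^g_n)$. Assembling these pieces, induction on $n$ (with the base case $n=0$ handled directly from the initial data, and the outer induction on $g$ already supplying the sequences $T^k$ for $k<g$) establishes that Algorithm \ref{alg2} correctly computes $x^g_n, y^g_n, T^i_n, Y^i_n$, which is the assertion of Lemma \ref{L.alg2}.
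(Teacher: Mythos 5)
Your proposal is correct and follows essentially the same route as the paper, which simply states that the lemma ``follows from Lemma \ref{L.alg1}'' and leaves the details implicit. You have fleshed out exactly that argument, adding the routine but worthwhile verification that the Step (3) update rules maintain the sets $T^i_n, Y^i_n$ consistently with Definition \ref{Def.T.Y}.
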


\begin{example}
We illustrate the data required to run Algorithm \ref{alg2} in Figure \ref{F.alg2}. To calculate $(x^3_5, y^3_5)$, the input,
illustrated in given white box, includes
$(x^3_4, y^3_4) = (7,9)$,
$T^0_4 = \{7\}$,    $Y^0_4 = \{7\}$,
$T^1_4 = \{7, 8\}$, $Y^1_4 = \{8\}$,
$T^2_4 = \{6, 7\}$, $Y^2_4 = \{7\}$,
$T^3_4 = \{7, 9\}$, $Y^0_4 = \{8, 9\}$.

Step (1) gives $x^3_5 = 10$ as the smallest integer bigger than $x^3_4 = 7$ and not in the set $T^3_4 \cup Y^3_4 = \{7, 8,
9\}$. 

In step (2), we recursively calculate $T^k_5, Y^k_5$ for $k \leq 2$. These new sets are in colored boxes in the first three
columns.
$T^0_5 = \{10\}$,     $Y^0_5 = \{10\}$,
$T^1_5 = \{9, 10\}$,  $Y^1_5 = \{10\}$,
$T^2_5 = \{10, 11\}$, $Y^2_5 = \{11\}$.
We can now ignore $T^k_4, Y^k_4$ for $k \leq 2$. Let us take the opportunity to discuss the role of $f_k(v)$ in Definition
\ref{D.f.v}. Let $v = 10$, based on Definition \ref{D.f.v}, $f_0(10)$ is the index such that to $x^0_{f_0(10)} \leq 10$ and
$x^0_{f_0(10)+1} > 10$. This gives $x^0_{f_0(10)} = 10$ or $f_0(10) = 9$. Also in Figure \ref{F.alg2}, one can see that we
have replaced the pair $(7, 7)$ by $(8, 8)$ and then $(9, 9)$ and end with $(10, 10)$ before assigning $\{10\}$ to  $T^0_5$
and $\{10\}$ to  $Y^0_5$. The same process is also applied for $T^1_5, Y^1_5, T^2_5, Y^2_5$.

In step (2), we next start with $z = 0$, giving $x^3_5 + z = 10$. But $(10,10)$ appears in $T^0_5$. By increasing $z$ to $1$
we have $(10,11)$ appearing in $T^2_5$. Then we increase $z$ to $2$, giving $(10,12)$ and satisfying step (2). Therefore,
$y^3_5 = 12$.

In step (3), we assign $T^3_5 = \{10, 12\}$ and $Y^3_5 = \{12\}$. \qed
\end{example}


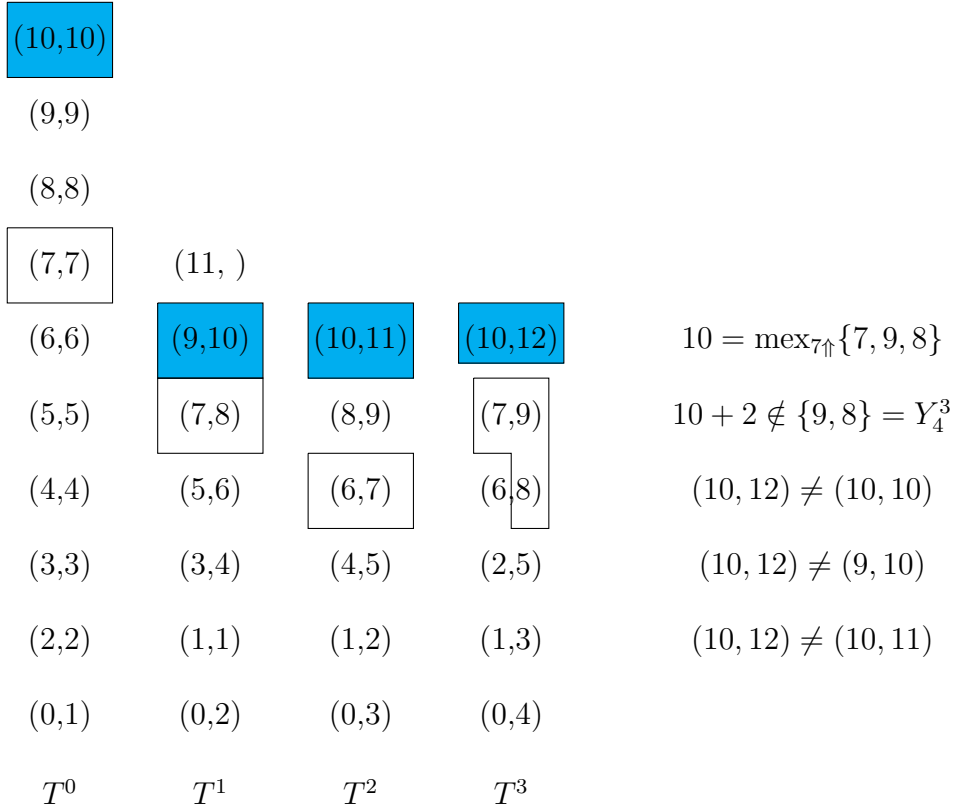
\begin{figure}[ht]
\begin{center}
\begin{tikzpicture}

\node at (0,-1) {$T^0$};
\node at (0,0) {(0,1)};
\node at (0,1) {(2,2)};
\node at (0,2) {(3,3)};
\node at (0,3) {(4,4)};
\node at (0,4) {(5,5)};
\node at (0,5) {(6,6)};
\node at (0,6) {(7,7)};
\node at (0,7) {(8,8)};
\node at (0,8) {(9,9)};
\node at (0,9) {(10,10)};
\draw [black, fill=cyan] (-.7,9.5) -- (0.7,9.5) -- (0.7,8.5) -- (-.7,8.5) -- (-.7,9.5);
\node at (0,9) {(10,10)};
\draw [black] (-.7,6.5) -- (0.7,6.5) -- (0.7,5.5) -- (-.7,5.5) -- (-.7,6.5);

\node at (2,-1) {$T^1$};
\node at (2,0) {(0,2)};
\node at (2,1) {(1,1)};
\node at (2,2) {(3,4)};
\node at (2,3) {(5,6)};
\node at (2,4) {(7,8)};
\node at (2,5) {(9,10)};
\node at (2,6) {(11, )};
\draw [black, fill=cyan] (1.3,5.5) -- (2.7,5.5) -- (2.7,4.5) -- (1.3,4.5)--(1.3,5.5);
\node at (2,5) {(9,10)};
\draw [black] (1.3,4.5) -- (2.7,4.5) -- (2.7,3.5) -- (1.3,3.5)--(1.3,4.5);

\node at (4,-1) {$T^2$};
\node at (4,0) {(0,3)};
\node at (4,1) {(1,2)};
\node at (4,2) {(4,5)};
\node at (4,3) {(6,7)};
\node at (4,4) {(8,9)};
\draw [black] (3.3,3.5) -- (4.7,3.5) -- (4.7,2.5) -- (3.3,2.5)--(3.3,3.5);
\draw [black, fill=cyan] (3.3,5.5) -- (4.7,5.5) -- (4.7,4.5) -- (3.3,4.5)--(3.3,5.5);
\node at (4,5) {(10,11)};

\node at (6,-1) {$T^3$};
\node at (6,0) {(0,4)};
\node at (6,1) {(1,3)};
\node at (6,2) {(2,5)};
\node at (6,3) {(6,8)};
\node at (6,4) {(7,9)};
\draw [black, fill=cyan] (5.3,5.5) -- (6.7,5.5) -- (6.7,4.7) -- (5.3,4.7) -- (5.3,5.5);
\node at (6,5) {(10,12)};
\draw [black] (5.5,4.5) -- (6.5,4.5) -- (6.5,2.5) -- (6,2.5) -- (6.0,3.5)--(5.5,3.5)-- (5.5,4.5);

\node at (10,5) {$10 = \mex_{{7}\Uparrow} \{ 7,9,8\}$};
\node at (10,4) {$10 + 2 \notin \{9,8\} = Y^3_4$};
\node at (10,3) {$(10,12) \neq (10,10)$};
\node at (10,2) {$(10,12) \neq (9,10)$};
\node at (10,1) {$(10,12) \neq (10,11)$};

\end{tikzpicture}
\caption{Finding $(x^3_5,y^3_5) = (10,12)$ using Algorithm \ref{alg2}, using only the data in boxes.
(We do not need to calculate the second entry on the top of column 2.)} \label{F.alg2}
\end{center}
\end{figure}


\begin{remark} \label{R.alg2.linear}
There are three improvements in Algorithm \ref{alg2}.
\begin{enumerate} \itemsep0em
\item the set $T^g_{n-1} \cup Y^g_{n-1}$ has at most $g+2$ elements;
\item $x^g_{n} + z$ has to be compared with only $g+1$ values ($y^g_{n-i}$ for $i \leq g+1$) rather than $n$ values
    $y^g_{n-i}$ as in Algorithm \ref{alg1};
\item $(x^g_n, x^g_n+z)$ has to be compared with only $g$ pairs $(x^k_{f_k(x^g_{n})},y^k_{f_k(x^g_{n})})$ for $0 \leq k
    \leq g-1$ rather than up to $(x^g_n+1)g$ pairs $(x^k_m,y^k_m)$ with $k < g$ as in Algorithm \ref{alg1} (Remark
    \ref{R.ij}).  \qed 
    $g(x^g_{n}+1)$ pairs to be compared.
\end{enumerate}
\end{remark}


\begin{remark} \label{R.alg2}
Note that $|T^k_n| \leq 2$, $|Y^k_n| \leq k+1 \leq g+1$ and $T^k_n \cap Y^k_n \neq \emptyset$, and so Algorithm \ref{alg2}
stores only $\sum_{k = 0}^{g} (k+2) = (g^2 + 5g + 4)/2$ values in $T^g_{n-1} \cup Y^g_{n-1}$ and $\bigcup_{k = 0}^{g-1}
\big(T^k_n \cup Y^k_n\big)$. By Remark \ref{R.ij} and Corollary \ref{SN.2.bound.alg}, these values are all at most $2n+g+1$.
We will use this finiteness to prove a periodicity property in the next section. \qed
\end{remark}



\subsection{A periodicity property of $g$-sequences}
We discuss the pattern of $b^g_n - a^g_n$ in the $g$-sequence $S^g$ when $n$ increases. One can see in Proposition \ref{G0-5}
that for $g \leq 5$, $b^g_n - a^g_n$ will eventually be a constant. When $g$ is large enough, this need not happen.
Nonetheless, we will show that $(b^g_n - a^g_n)_{n \geq 0}$ is eventually periodic. For example, for $g = 6$, the sequence
$(b^6_n - a^6_n)_{n \geq 5}$ is the repetition of ``3, 3, 4, 3, 3, 4, 3, 3, 4, 4, 4, 4".

Recall that a sequence $(s_n)_{n \geq 0}$ is said to be {\it ultimately periodic} if there exist $n_0$ and $p$ such that
$s_{n+p} = s_n$ for all $n \geq n_0$.

We will modify Algorithm \ref{alg2} to obtain a new algorithm computing $y^g_n-x^g_n$. In the new algorithm, we will overcome
the increasing values in $T^k_n, Y^k_n$ so that we only need to store bounded values, making the algorithm able to be
implemented on a finite state machine. This helps us to prove the periodicity of the difference $y^g_n-x^g_n$. The following
lemma gives bounds on the values used in Algorithm 2.


\medskip

\begin{lemma} \label{L.alg3.bound}
Algorithm \ref{alg2} works with numbers at most $x^g_n + g + 1$ and at least $x^g_n - 2g - 2$.
\end{lemma}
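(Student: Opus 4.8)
The plan is to enumerate precisely the integers the algorithm touches while producing $(x^g_n,y^g_n)$, and then bound each of them using the three estimates already at hand: the window inequality $b-a\le\G(a,b)\le b+a-1$ of Proposition \ref{SN.2.bound}, the one-step gap bound $x^k_i\le x^k_{i-1}+k+2$ of Corollary \ref{SN.2.bound.alg} (which holds at every level $k\le g$), and the closed form $\G(0,b)=b-1$ of Lemma \ref{L12}. Reading off Definition \ref{Def.T.Y}, the handled integers fall into four groups: the entries of the level-$g$ input $T^g_{n-1}\cup Y^g_{n-1}$; the value $x^g_n$ together with the trial values $x^g_n+z$; the entries generated while each lower strip $T^k$ ($k<g$) is advanced from index $f_k(x^g_{n-1})$ up to $f_k(x^g_n)$, including the single look-ahead first entry $x^k_{f_k(x^g_n)+1}$ that must be read to detect that the advancement has passed $x^g_n$; and the output data $(x^g_n,y^g_n)$ with $T^g_n,Y^g_n$.

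For the upper bound, every pair $(x^k_j,y^k_j)$ has Sprague-Grundy value $k\le g$, so Proposition \ref{SN.2.bound} gives $y^k_j-x^k_j\le k$ when $x^k_j>0$, and Lemma \ref{L12} gives $y^k_j-x^k_j=k+1$ when $x^k_j=0$; in either case $y^k_j\le x^k_j+k+1$. Since $x^k_j\le x^g_n$ for every index $j\le f_k(x^g_n)$ and every $k\le g$, all second entries read, in particular $y^g_n$, are at most $x^g_n+g+1$, and the same estimate bounds the trial values $x^g_n+z$. The look-ahead first entries are controlled separately: by Corollary \ref{SN.2.bound.alg} at level $k$, $x^k_{f_k(x^g_n)+1}\le x^k_{f_k(x^g_n)}+k+2\le x^g_n+k+2\le x^g_n+g+1$, using $k\le g-1$. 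Hence no integer the algorithm handles exceeds $x^g_n+g+1$.

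For the lower bound, the smallest integers are the starting first entries of the lower strips. Fix $k<g$ and put $f=f_k(x^g_{n-1})$, so that $x^k_{f+1}>x^g_{n-1}$, i.e. $x^k_{f+1}\ge x^g_{n-1}+1$. Corollary \ref{SN.2.bound.alg} at level $k$ then yields $x^k_f\ge x^k_{f+1}-(k+2)\ge x^g_{n-1}-k-1\ge x^g_{n-1}-g$. Applying the same corollary at level $g$ gives $x^g_{n-1}\ge x^g_n-g-2$, whence $x^k_{f_k(x^g_{n-1})}\ge x^g_n-2g-2$. Because first entries only increase as each $T^k$ is advanced (Remark \ref{R.ij}) and every second entry dominates its own first entry (as $x^k_j\le y^k_j$), all lower-strip integers stay $\ge x^g_n-2g-2$; the level-$g$ data lie even higher, since the elements of $Y^g_{n-1}$ are by definition $\ge x^g_{n-1}\ge x^g_n-g-2$. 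This establishes the claimed two-sided bound.

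I expect the main obstacle to be the bookkeeping of the first paragraph rather than any individual inequality: one must be certain the list of handled quantities is complete and that the recursive advancement of the strips $T^k$ introduces nothing outside $[x^g_n-2g-2,\,x^g_n+g+1]$. The delicate points are that the right-hand extreme $x^g_n+g+1$ is realised only by the level-$g$ edge second entry and by the level-$(g-1)$ look-ahead first entry, and that the left-hand extreme $x^g_n-2g-2$ arises from stacking the level-$k$ and level-$g$ gap bounds of Corollary \ref{SN.2.bound.alg}. Once the inventory of quantities is fixed, each bound is a one-line consequence of Proposition \ref{SN.2.bound} and Corollary \ref{SN.2.bound.alg}, so no induction beyond what those results already package should be required.
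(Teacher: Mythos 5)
Your proposal is correct and follows essentially the same route as the paper: the upper bound comes from the window inequality of Proposition \ref{SN.2.bound} applied to second entries together with the one-step gap bound of Corollary \ref{SN.2.bound.alg} applied to the look-ahead first entries $x^k_{f_k(x^g_n)+1}$, and the lower bound comes from stacking that gap bound at level $k$ and at level $g$ to control $x^k_{f_k(x^g_{n-1})}$. Your extra care with the $x^k_j=0$ case via Lemma \ref{L12} (since Proposition \ref{SN.2.bound} is stated only for positive entries) is a small refinement the paper omits, but it does not change the argument.
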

\begin{proof}
In column $T^g$, the first entries $x^g_i$ are increasing. For all $i \leq n$, by Proposition \ref{SN.2.bound}, we have
$y^g_i \leq x^g_i +g \leq x^g_n + g$. 
In each column $T^k$, we need to calculate up to the point $x^k_{f_k(x^g_n)+1}$ (the first one from the bottom such that the
first entry is greater than $x^g_n$) for the first entries and $y^k_{f_k(x^g_n)}$ for the second entries. By Corollary
\ref{SN.2.bound.alg}, we have $x^k_{f_k(x^g_n)+1} \leq x^k_{f_k(x^g_n)}+k+2 \leq x^g_n + g+1$. For the second entries, by
Proposition \ref{SN.2.bound}, we have $y^k_i \leq x^k_i +k \leq x^k_{f_k(x^g_n)} + k \leq x^g_n + g-1$. 
\ref{SN.2.bound.alg}.

The smallest value used by the algorithm is
$\min\{x^k_{f_k(x^g_{n-1})} \mid 0 \leq k \leq g\}$.
Recall that
$x^k_{f_k(x^g_{n-1})} \geq x^k_{f_k(x^g_{n-1})+1} - k - 2$ by Corollary \ref{SN.2.bound.alg}.
By Definition \ref{D.f.v}, we have $x^k_{f_k(x^g_{n-1})+1} > x^g_{n-1}$.
Also by Corollary \ref{SN.2.bound.alg}, $x^g_{n-1} \geq x^g_{n}-g-2$. Therefore,
$x^k_{f_k(x^g_{n-1})} \geq x^g_n - g - k -3 \geq x^g_n - 2g - 2$.
\end{proof}

\begin{definition} \label{D.alg3}
{\it Algorithm} 3 is obtained from Algorithm \ref{alg2} as follows. Let $b_{n -1}= \min_{i = 0}^{g}(T^i_{n-1})$, $x'^g_{n-1}
= x^g_{n-1} - b_{n-1}$, $y'^g_{n-1} = y^g_{n-1} - b_{n-1}$, $T'^i_{n-1} = \{x - b_{n-1} \mid x \in T^i_{n-1}\}$, $Y'^i_{n-1}
= \{x - b_{n-1} \mid x \in Y^i_{n-1}\}$. We replace $(x^g_{n-1},y^g_{n-1}), T^i_{n-1}, Y^i_{n-1} \ \ (0 \leq i \leq g)$ by
$(x'^g_{n-1},y'^g_{n-1}), T'^i_{n-1}, Y'^i_{n-1} \ \ (0 \leq i \leq g)$ respectively.
\qed \end{definition}

\medskip
The following lemma is straightforward.

\begin{lemma} \label{L.alg30}
The output of Algorithm 3 is obtained by subtracting $b_{n-1}$ from every value in the output of Algorithm \ref{alg2}.
\end{lemma}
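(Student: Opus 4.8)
The plan is to exploit the fact that every operation carried out by Algorithm \ref{alg2} is \emph{translation-equivariant}: adding a fixed constant to all of the numbers it manipulates simply adds that constant to all of the numbers it produces. By Definition \ref{D.alg3}, Algorithm 3 is exactly Algorithm \ref{alg2} run on inputs in which every value has been decreased by the single constant $b_{n-1} = \min_{i=0}^{g}(T^i_{n-1})$. Since $x^g_{n-1} \in T^g_{n-1}$ we have $x^g_{n-1} \geq b_{n-1}$, so this shift leaves every input value nonnegative and Algorithm 3 is a legitimate run of Algorithm \ref{alg2}; the equivariance will then force every output value to be decreased by $b_{n-1}$ as well, which is the assertion of the lemma.

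First I would record the translation-equivariance of the two ingredients that are not plain comparisons. The floor-mex of Definition \ref{D.mex.min} satisfies $\mex_{(a-c)\Uparrow}(S-c) = \mex_{a\Uparrow}(S) - c$ for every constant $c$, since the two defining conditions $b \geq a$ and $b \notin S$ are each preserved when $b$, $a$ and every element of $S$ are decreased by $c$. Applying this with $c = b_{n-1}$ to Step (1) shows that Algorithm 3 produces $x^g_n - b_{n-1}$ in place of $x^g_n$. Likewise, the update of $Y^g_n$ in Step (3) deletes the elements of $Y^g_{n-1}$ below the threshold $x^g_n$ and adjoins $y^g_n$; the threshold inequality and the adjoined value are both shifted by $-b_{n-1}$, so $Y^g_n$ is shifted by $-b_{n-1}$, and trivially so is $T^g_n = \{x^g_n, y^g_n\}$.

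Next I would argue that the relative offset $z$ chosen in Step (2) is \emph{unchanged} by the shift, so that $y^g_n = x^g_n + z$ is shifted by exactly $-b_{n-1}$. Condition (2a), $x^g_n + z \notin Y^g_{n-1}$, involves only $x^g_n$ and $Y^g_{n-1}$ and is invariant when both are decreased by $b_{n-1}$. Condition (2b) compares the pair $(x^g_n, x^g_n+z)$ with $(\min T^k_n, \max T^k_n)$ for $k < g$; this is where the lower-level outputs enter, and here I would invoke an induction on $g$. For each $k < g$ the sets $T^k_n, Y^k_n$ are computed by the same Steps (1)--(3) applied at level $k$, so by the inductive hypothesis they too are shifted by $-b_{n-1}$, whence $\min T^k_n$ and $\max T^k_n$ are shifted by $-b_{n-1}$ and the equality test in (2b) is invariant under the simultaneous shift. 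Consequently the smallest admissible $z$ is identical in both algorithms.

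The only delicate point is the recursion in Step (2b): computing the required $T^k_n$ for $k < g$ entails advancing through the lower columns $T^k$ from index $f_k(x^g_{n-1})$ to index $f_k(x^g_n)$, each advance being itself an instance of Steps (1)--(3). The main work of the proof is to set up the induction so that every such advance falls under the inductive hypothesis, i.e.\ is performed on inputs uniformly shifted by $-b_{n-1}$; the base case $g=0$ is immediate because the recursion is then empty. Once this bookkeeping is in place, collecting the shifted outputs of Steps (1)--(3) at all levels $0 \leq k \leq g$ yields precisely the output of Algorithm \ref{alg2} with $b_{n-1}$ subtracted from every value.
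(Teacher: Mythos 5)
Your proposal is correct: the translation-equivariance of $\mex_{a\Uparrow}$, of the offset search for $z$, and of the set updates, combined with the observation that $b_{n-1}$ is the minimum stored value so the shift keeps everything nonnegative, is precisely the argument the lemma requires. The paper itself gives no proof (it declares the lemma ``straightforward''), and your write-up supplies the intended reasoning, including the one genuinely delicate point about propagating the shift through the lower-level columns $T^k$ for $k<g$.
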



We  prove the following.

\begin{theorem} \label{g-sequence-period}
For every $g$-sequence $S^g = (a^g_n,b^g_n)_{n \geq 0}$, the sequence $(b^g_n - a^g_n)_{n \geq 0}$ is ultimately periodic.
\end{theorem}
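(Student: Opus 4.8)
The plan is to exhibit the difference sequence $(b^g_n - a^g_n)_{n \geq 0} = (y^g_n - x^g_n)_{n \geq 0}$ as the output of a finite-state machine and then apply the pigeonhole principle; the machine is Algorithm~3. The key point is that $y^g_n - x^g_n$ is unchanged when a single constant is subtracted from every value, so it can be read off from the \emph{normalised} data of Algorithm~3 rather than from the unbounded data of Algorithm~2. To set up the state, recall from Lemma~\ref{L.alg2} that Algorithm~2 passes from step $n-1$ to step $n$ using only the finite packet $(x^g_{n-1},y^g_{n-1})$ together with $T^i_{n-1}, Y^i_{n-1}$ for $0 \leq i \leq g$, and that it outputs precisely the analogous packet at step $n$; thus Algorithm~2 is already a deterministic transition on such packets. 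I take the \emph{state} $\sigma_n$ to be this packet after the shift of Definition~\ref{D.alg3}, i.e.\ with $b_{n-1} = \min_{i} T^i_{n-1}$ subtracted from every entry.

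Next I would show that the state space is finite. By Remark~\ref{R.alg2} a packet consists of at most $(g^2+5g+4)/2$ integers, with $|T^i_n| \leq 2$ and $|Y^i_n| \leq g+1$, so the number and sizes of the constituent sets are bounded by functions of $g$ alone. By Lemma~\ref{L.alg3.bound} every integer Algorithm~2 touches at step $n$ lies in the band $[\,x^g_n - 2g - 2,\ x^g_n + g + 1\,]$, an interval of $O(g)$ consecutive integers; subtracting $b_{n-1}$, which is itself at least $x^g_n - 2g - 2$, places every normalised entry in the fixed interval $[0,\ 3g+3]$. Hence $\sigma_n$ ranges over a finite set $\Sigma$ whose size depends only on $g$.

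It then remains to check that the transition is well defined on $\Sigma$. Each primitive operation of Algorithm~2 --- forming a $\mex_{a\Uparrow}$, testing set membership, and comparing pairs --- commutes with adding a fixed constant to all of its arguments, so the Algorithm~2 transition is translation-equivariant; combined with Lemma~\ref{L.alg30} this shows that $\sigma_{n+1}$ is determined by $\sigma_n$, making Algorithm~3 a deterministic map $\Sigma \to \Sigma$. As $\Sigma$ is finite, the orbit $(\sigma_n)_{n \geq 0}$ is ultimately periodic: there exist $n_0$ and $p$ with $\sigma_{n+p} = \sigma_n$ for all $n \geq n_0$. Since $b^g_n - a^g_n = y^g_n - x^g_n$ is the shift-invariant gap between the two entries of the current normalised pair, it is a function of $\sigma_{n+1}$, and therefore $(b^g_n - a^g_n)_{n \geq 0}$ inherits the period $p$.

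The main obstacle is verifying that the normalised packet is genuinely a self-contained state --- that nothing Algorithm~2 needs at step $n+1$ has been discarded by the normalisation or falls outside the recorded $O(g)$-wide band. This is precisely the content of Lemmas~\ref{L.alg1}, \ref{L.alg2} and \ref{L.alg3.bound}, which certify both that the bounded packets $T^i, Y^i$ suffice to drive the computation and that all required values stay within an $O(g)$-neighbourhood of $x^g_n$. Confirming the translation-equivariance of the transition, though routine, is the other point deserving care, since it is what licenses replacing the unbounded Algorithm~2 by the bounded Algorithm~3 without altering the difference sequence.
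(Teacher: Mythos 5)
Your proposal is correct and follows essentially the same route as the paper: both run the normalised Algorithm~3 as a finite-state machine, using Remark~\ref{R.alg2} to bound the number of stored values and Lemma~\ref{L.alg3.bound} to confine them to an $O(g)$-wide band, then conclude ultimate periodicity of the state orbit and hence of the difference sequence. Your version merely spells out the translation-equivariance and the pigeonhole step more explicitly than the paper does.
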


\begin{proof}
By Remark \ref{R.alg2}, Algorithm \ref{alg2} has used at most $(g^2+5g+4)/2$ stored values. By Lemma \ref{L.alg3.bound},
values used in Algorithm \ref{alg2} are between $x^g_n - 2g - 2$ and $x^g_n + g + 1$. Therefore, values used in Algorithm 3
are between $0$ and $3g+4$. Here $3g+4$ is the number of values in the integer interval $[x^g_n - 2g - 2, x^g_n + g + 1]$.
Hence, Algorithm 3 can be implemented on a finite state machine. The algorithm repeatedly returns values $b^g_n - a^g_n$ and
so the sequence $(b^g_n - a^g_n)_{n \geq 0}$ is ultimately periodic.
\end{proof}



\subsection{The ultimately additive periodicity of Sprague-Grundy values}

A sequence $(s_n)_{n \geq 0}$ is said to be {\it additively periodic} if there exist $n_0$ and $p$ such that $s_{n+p} =
s_n+p$ for all $n \geq n_0$ \cite{les}.

We show that in the expanded table of Table \ref{T1}, every row (column) is ultimately additively periodic. For example, for
$a = 1, 2, 3, 4, 5, 6$, the sequence $(\G(a,b))_{b\geq 0}$ is ultimately additively periodic with the period $p = 1, 3, 9,
36, 144, 720$,
respectively. There is a remarkable pattern with these periods $p$. The pattern is as follows:
\begin{align*}
\begin{cases}
p_2 = 3 p_1, \\
p_3 = 3 p_2, \\
p_4 = 4 p_3, \\
p_5 = 4 p_4, \\
p_6 = 5 p_5. \\
\end{cases}
\end{align*}
We do not yet know $p_7$, so we do not know if this pattern continues.

\medskip

Ultimately additive periodicity has been found for the {\sc Wythoff} game and some of its variants \cite{Dre99, Ho12,
Jiao.13, Landman}. In these variants, the players alternately move from a position $(a, b)$, following some given rules. For
example, a move in {\sc Wythoff} is one of following options: $(a, b) \rightarrow (a-i,b)$ with $1 \leq i < a$; $(a, b)
\rightarrow (a,b-i)$ with $1 \leq i < b$; $(a, b) \rightarrow (a-i,b-i)$ with $1 \leq i < a$. It has been proven for these
variants that the sequence $(\G(a,b))_{b \geq a}$ is ultimately additively periodic for every $a > 0$.  Now we prove this
periodicity for {\sc 2-Star Nim}. The technique used here was introduced by Landman for {Wythoff} \cite{Landman}.

\begin{theorem} \label{per.row}
The sequence $(\G(a,b))_{b \geq a}$ is ultimately additively periodic for every $a > 0$.

\end{theorem}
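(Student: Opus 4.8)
The plan is to fix $a>0$ and study the sequence $(\G(a,b))_{b\ge a}$ by encoding its computation as a finite-state process, following the technique that Theorem \ref{g-sequence-period} already used for the differences $b^g_n-a^g_n$. The key idea is that the Sprague-Grundy value $\G(a,b)$ for fixed $a$ can be computed from $\G(a,b-1)$ together with bounded local data, once we know the nearby entries in the columns below row $a$. Concretely, the bounds in Proposition \ref{SN.2.bound} give $b-a\le\G(a,b)\le a+b-1$, so the \emph{offset} $\G(a,b)-b$ always lies in the finite interval $[-a,\,a-1]$. I would therefore work with the offset sequence $o(b):=\G(a,b)-b$ and aim to show it is ultimately periodic; since $\G(a,b)=b+o(b)$, ultimate periodicity of $o$ with period $p$ yields $\G(a,b+p)=\G(a,b)+p$, which is exactly ultimate additive periodicity.

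First I would set up the recursive description of $\G(a,b)$ as a $\mex$ over the options from $(a,b)$: moving the token on the $b$-strip to any square $c<b$ (giving $\G(a,c)$), and moving the token on the $a$-strip to any square $c<a$ (giving $\G(c,b)$), subject to the \textsc{Star Nim} rule that the two tokens may not both occupy square $0$. The crucial reduction is that by Proposition \ref{SN.2.bound} the values $\G(a,c)$ for $c<b$ that can actually be hit as the $\mex$ target lie in a window of width $O(a)$ around $b$, so only boundedly many of them are relevant when computing whether a given candidate value is excluded. The other family of options, $\G(c,b)$ for $c<a$, involves only finitely many rows (there are exactly $a$ of them), and for each fixed $c<a$ the offset $\G(c,b)-b$ is itself confined to a bounded interval. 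Thus the entire computation of $o(b)$ depends only on a bounded-size snapshot of recent offset data across the $a$ rows at and below $a$.

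The next step is to argue, by induction on $a$, that each of the lower rows $c<a$ has an ultimately additively periodic Sprague-Grundy sequence; the base cases $a\le 2$ follow from Lemma \ref{L12}. Given that the rows below $a$ are eventually additively periodic, their offsets are eventually periodic, so after some threshold the ``external'' data feeding into the computation of $o(b)$ comes from a finite set of eventually periodic sequences. Combining this external periodic input with the bounded internal window described above, the state of the computation at step $b$ — namely the tuple of recent offsets in row $a$ together with the current phases of the lower rows — takes values in a finite set. Hence the map $b\mapsto(\text{state at }b)$ is a trajectory of a finite deterministic system driven by an eventually periodic input, so the combined process is eventually periodic, and in particular $o(b)$ is ultimately periodic. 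Unwinding the substitution gives the additive periodicity of $\G(a,b)$.

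The main obstacle I anticipate is making the ``bounded window'' claim fully rigorous: I must verify that when computing the $\mex$ for $\G(a,b)$, a target value $v$ in the admissible range $[b-a,a+b-1]$ is blocked only by options whose Sprague-Grundy value equals $v$, and that all such blocking options lie within a window of $b$ whose width is controlled by $a$ (so that no unboundedly-old row-$a$ data is ever consulted). This requires using Proposition \ref{SN.2.bound} in both directions to confine which $(a,c)$ and $(c,b)$ can carry a given value, and handling the boundary interaction at square $0$ (the single exceptional position forbidden by the \textsc{Star Nim} rule) as a finite correction. Once the finite-state encoding is established, the periodicity conclusion is the standard eventually-periodic-input-into-finite-automaton argument already invoked in the proof of Theorem \ref{g-sequence-period}.
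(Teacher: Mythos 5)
Your proposal is correct and follows essentially the same route as the paper: normalize $\G(a,b)$ by subtracting $b$ (the paper uses $\H(a,b)=\G(a,b)-b+a$, confined to $[0,2a-1]$), use Proposition \ref{SN.2.bound} to restrict both the candidate values and the relevant options to a window of width $O(a)$, and run the resulting bounded-memory computation as a finite state machine in Landman's style. The only organizational difference is that you induct on $a$ and feed the lower rows in as eventually periodic external input, whereas the paper avoids the induction by tracking the exclusion data for all rows $a-i$, $0\le i\le a$, simultaneously in one machine via the bit strings $\S_{L^c(a-i,b)}$ and $\S_{D^c(a-i,b)}$; both versions are sound.
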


\begin{proof}
Set $\H(0,b) = 1$ and $\H(a,b) = \G(a,b) - b + a$ for $1 \leq a \leq b$.
By Proposition \ref{SN.2.bound}, $0 \leq \H(a,b) \leq 2a-1$.
Note that $(\G(a,b)_{b \geq 0}$ is ultimately additively periodic if and only if $(\H(a,b))_{b \geq 0}$ is ultimately
periodic. We  prove the latter by showing that $(\H(a,b))_{b \geq 0}$ can be computed on a finite state machine.

Set $M = \{\G(a-i,b), \G(a,b-j)\mid 1\leq i \leq a, 1 \leq j \leq b\}$.
By the definition of $\G$, $\G(a,b) = \mex(M)$.
Note that the number of elements in $M$ increases along with the increase of $b$. We first overcome this constraint.

Set $N_a^b = \{b-a, b-a+1, \ldots, b+a-1\}$.
By Proposition \ref{SN.2.bound}, $\G(a,b) \in N_a^b$.
Therefore, $\G(a,b) =  \mex(M) = \min(N_a^b \setminus M)$.
Note that $\G(a,b-j) < b-a$ if $j > 2a-1$ and so we can exclude these Sprague-Grundy values in $M$ when computing $\min(N_a^b
\setminus M)$.
We can skip those $b < 2a-1$ and assume that $b \geq 2a-1$.
Set $M' = \{\G(a-i,b), \G(a,b-j)\mid 1\leq i \leq a, 1 \leq j \leq 2a-1\}$.
Then $\G(a,b) = \min(N_a^b \setminus M')$.

\item [(1)] Set

$L(a,b) = \{\G(a,b-j) \mid 1 \leq j \leq 2a-1\}$,  $L^c(a,b) = N_a^b \setminus L(a,b)$,

$D(a,b) = \{\G(a-i,b) \mid 1 \leq i \leq a\}$,   \qquad   $D^c(a,b) = N_a^b \setminus D(a,b)$.
Then $\G(a,b) = \operatorname{min}(L^c(a,b) \cap D^c(a,b))$ whose $\min$ set has at most $2a$ elements.

Thus, we have shown that $\G(a,b)$ can be computed from at most $2a$ values regardless of how large $b$ is. The next step is
to show that we can store only a finite number of bits  to compute $\H(a,b)$ regardless of the increase of $b$.

\medskip

\item [(2)] Define $\S_{L^c(a,b)}$ to be the string of $2a$ bits  such that the $l^{th}$ bit $\S_{L^c(a,b)}[l]$ (with $0 \leq
    l \leq 2a-1$) is $1$ if  $b-a+l \in L^c(a,b)$ and $0$ if otherwise.
Intuitively, $\S_{L^c(a,b)}$ is obtained from the sequence $(b-a, b-a+1, \ldots, a+b-1)$ by setting the $l^{th}$ bit to $1$
if $b-a+l \in L^c(a,b)$ and $0$ if otherwise.
Equivalently, $\S_{L^c(a,b)}$ is obtained from the string of $2a$ bits  by setting the $l^{th}$ bit to $0$ if $l \in
\{\H(a,b-j) \mid 1 \leq j \leq 2a-1\}$ and $1$ if otherwise.

We next define $\S_{D^c(a,b)}$ as the string of $2a$ bits  such that the $l^{th}$ bit $\S_{D^c(a,b)}[l]$ (with $0 \leq l \leq
2a-1$) is $1$ if $b-a+l \in D^c(a,b)$ and $0$ if otherwise.
Similarly, $\S_{D^c(a,b)}$ is the string of $2a$ bits  obtained by setting the $l^{th}$ bit 
to $0$ if $l \in \{\H(a-i,b) \mid 1 \leq i \leq a\}$ and to $1$ if otherwise.

Then $\G(a,b) = (b-a) + \min\{l \mid \S_{L^c(a,b)}[l = \S_{D^c(a,b)}[l] = 1\}$ and so $\H(a,b) = \min\{l \mid
\S_{L^c(a,b)}[l] = \S_{D^c(a,b)}[l] = 1\}$.

Regardless of the increase of $b$, $\H(a,b)$ can be computed directly from two $2a$-bit sequences $\S_{L^c(a,b)}$ and
$\S_{D^c(a,b)}$.

\medskip

\item [(3)] We show that there is an algorithm that computes $\H(a,b+i)$ for all $i$ by storing only $\S_{L^c(a-i,b)}$ and
    $\S_{D^c(a-i,b)}$ for $0 \leq i \leq a$.

We first discuss how $\S_{L^c(a-i,b+1)}$ for $0 \leq i \leq a$ can be obtained from $\S_{L^c(a-i,b)}$ and $\H(a-i,b)$. By
definition,
\begin{align*}
L^c(a-i,b+1) &= N_{a-i}^{b+1} \setminus L(a-i,b+1)  \\
                 &= N_{a-i}^{b+1} \setminus \{\G(a-i,b+1-j) \mid 1 \leq j \leq 2(a-i)-1\}.
\end{align*}
Note that we exclude those $\G(a-i,b+1-j)$ with $j > 2(a-i)-1$ in the subtracted set since these values are less than
$b+1-a+i = \min(N_{a-i}^{b+1})$ by Proposition \ref{SN.2.bound}. Also note that $N_{a-i}^{b+1}$ can be obtained from
$N_{a-i}^{b}$ by removing the first (smallest) entry and adding $b+a-i$ at the end.
Equivalently, $\S_{L^c(a-i,b+1)}$ can be obtained from string $\S_{L^c(a-i,b)}$ by removing the leftmost digit and adding $1$
to the rightmost end before setting the $l$-th digit as $0$ if $\H(a-i,b) = l$, leaving other digits as they were. Note that
there is no need of storing $\H(a-i,b)$ as it can be computed from $\S_{L^c(a-i,b)}$ and $\S_{D^c(a-i,b)}$.

We next discuss how $\S_{D^c(a-i,b+1)}$ can be computed inductively on the first entry, starting from $a-i = 0$. For this
case, $\S_{D^c(0,b+1)}$ is the string of $2a$ $1$s. For $a-i > 0$, we have
\begin{align*}
D^c(a-i,b+1) &= N_{a-i}^{b+1} \setminus L(a-i,b+1)  \\
                 &= N_{a-i}^{b+1} \setminus \{\G(a-i-j,b+1) \mid 1 \leq j \leq a-i\}.
\end{align*}
Therefore, $\S_{D^c(a-i,b+1)}$ can be obtained from $\S_{D^c(a-i-1,b+1)}$ by removing the leftmost digit and adding $1$ to
the rightmost end before setting the $l$-th digit as $0$ if $\H(a-i-1,b+1) = l$, leaving other digits as they were. Note that
there is no need to store $\H(a-i-1,b+1)$ since it can be computed from $\S_{L^c(a-i-1,b+1)}$ and $\S_{D^c(a-i-1,b+1)}$.

\medskip

\item [(4)] Thus, $\H(a,b+1)$ can be computed on a finite state machine that requires storing $\S_{L^c(a-i,b)}$ and
    $\S_{D^c(a-i,b)}$ for $0 \leq i \leq a$. After that we replace stored data by increasing the second entry by 1 and
    compute $\H(a,b+2)$ and so on. This finite state machine requires \rm{O}$(a^2)$ states each of which needs at most $2a$
    bits. Using this finite state machine, we can compute the sequence $(\H(a,b))_{b \geq {2a-1}}$. Since the finite state
    machine eventually repeats, the sequence $(\H(a,b))_{b \geq 0}$ is ultimately periodic and so the sequence $(\G(a,b))_{b
    \geq 0}$ is ultimately additively periodic.
\end{proof}


The following result means every row (column) in the expanded table of Table \ref{T1} contains exactly one Sprague-Grundy
value $g$ for every $g \geq 0$.

\begin{proposition} \label{agb}
For given nonnegative integers $g$ and  $a$, there exists exactly one $b$ such that $\G(a,b)  = g$. Moreover, Proposition
\ref{SN.2.bound} implies $|b - a| \leq g$.
\end{proposition}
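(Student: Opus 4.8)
The plan is to prove that, for each fixed $a$, the map $b \mapsto \G(a,b)$ is a bijection from the valid second coordinates onto $\{0,1,2,\ldots\}$; the Proposition is precisely the statement that each value $g$ has a unique preimage. The uniqueness half is immediate: if $\G(a,b_1)=\G(a,b_2)=g$ with $b_1<b_2$ and $a\ge 1$, then the strip-$2$ move $(a,b_2)\to(a,b_1)$ is legal and joins two positions of equal Sprague--Grundy value, which is impossible by the $\mex$ definition of $\G$ (equivalently, by Lemma~\ref{g-positions}(1)); the case $a=0$ is handled by the explicit formula $\G(0,b)=b-1$ of Lemma~\ref{L12}. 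The ``moreover'' bound follows from Proposition~\ref{SN.2.bound} together with the symmetry $\G(a,b)=\G(b,a)$, which give $\G(a,b)\ge|b-a|$ for $a,b\ge 1$; hence $\G(a,b)=g$ forces $|b-a|\le g$.

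For existence I would argue by strong induction on $a$ that row $a$ is surjective, i.e.\ every $g\ge 0$ occurs as some $\G(a,b)$. The base case $a=0$ is the formula $\G(0,b)=b-1$, which is already a bijection. For the inductive step, fix $a\ge 1$ and suppose, for contradiction, that some value $g$ is missing from row $a$. By the induction hypothesis each lower row $a'<a$ is a bijection, so for every $a'<a$ there is a unique second coordinate $\beta(a')$ with $\G(a',\beta(a'))=g$. Collect these into the finite set $B=\{\beta(a')\mid 0\le a'<a\}$, which has at most $a$ elements.

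The key observation is that a strip-$1$ move from a row-$a$ position $(a,b)$ lands in $S_g$ precisely when $b\in B$: such a move produces $(a',b)$ with $a'<a$, and $\G(a',b)=g$ holds if and only if $b=\beta(a')$. Since $\G(a,b)\ge b-a$ by Proposition~\ref{SN.2.bound}, every $b>a+g$ gives $\G(a,b)>g$, so there are infinitely many row-$a$ positions of value exceeding $g$; as $B$ is finite I can choose one such $b$ with $b\notin B$. Then $(a,b)\notin\bigcup_{i=0}^{g}S_i$, so by Lemma~\ref{g-positions}(2) there is a move from $(a,b)$ into $S_g$. This move is either a strip-$1$ move, which is excluded because $b\notin B$, or a strip-$2$ move $(a,b)\to(a,b')$ with $\G(a,b')=g$. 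The latter exhibits the value $g$ in row $a$, contradicting our assumption. Hence row $a$ is surjective, completing the induction.

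The main obstacle is exactly this surjectivity step: injectivity is essentially free, but a naive counting argument using only the bounds of Proposition~\ref{SN.2.bound} leaves a slack of about $2a$ and shows only that at most $a-1$ values can be missing from row $a$, not that none are. The device that closes this gap is the induction on $a$ combined with the move classification above: knowing that all lower rows are already bijective forces the set $B$ of ``strip-$1$ escape'' coordinates to be finite, which lets me select a high-value position in row $a$ whose only route into $S_g$ is a strip-$2$ move, thereby manufacturing the missing value within row $a$ itself.
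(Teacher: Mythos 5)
Your proof is correct, but it takes a genuinely different and noticeably heavier route than the paper's. For existence the paper does not induct on $a$ at all: it simply considers the single position $(a,c)$ with $c=a+g+1$, notes $\G(a,c)\geq c-a=g+1>g$ so some option of $(a,c)$ has value $g$, and then kills every first-strip option in one line because $\G(a-i,c)\geq c-(a-i)=g+1+i>g$ by the lower bound of Proposition \ref{SN.2.bound}; hence the option of value $g$ must be $(a,c-i)$, and $b=c-i$ works. In other words, the same lower bound you invoke to make $\G(a,b)$ large also shows directly that no column $b\geq a+g+1$ can lie in your escape set $B$ --- so the induction on $a$, the bijectivity of the lower rows, and the finiteness of $B$ are all dispensable. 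Your closing paragraph slightly misdiagnoses the obstacle: the gap is not closed by knowing lower rows are bijections, but by applying the lower bound to the first-strip \emph{options} rather than to a count of values. That said, your argument is sound (your move classification is exactly the paper's, and Lemma \ref{g-positions} legitimately substitutes for the bare $\mex$ definition), and you are more explicit than the paper on two points it leaves implicit: the uniqueness of $b$ (via the legal second-strip move between $(a,b_1)$ and $(a,b_2)$) and the base case $a=0$ via Lemma \ref{L12}, where Proposition \ref{SN.2.bound} does not apply.
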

\begin{proof}
Let $c = a+g+1$. Consider the position $(a,c)$. The lower bound in Proposition \ref{SN.2.bound} gives $\G(a,c) \geq c-a =
g+1$, implying the existence of some $i$ such that either $\G(a-i,c) = g$ or $\G(a,c-i) = g$. The first case cannot hold as
$\G(a-i,c) \geq c-a+i = g+ 1 +i$ by Proposition \ref{SN.2.bound} and so the second case holds. Let $b = c-i$, we have $b$ as
required.
\end{proof}


\subsection{The ultimate periodicity of Sprague-Grundy values}

We end this section with a conjecture on the ultimate periodicity of the values on diagonals parallel to the main diagonal.

\begin{conjecture} \label{per.dia}
The sequence $(\G(a_1+i,a_2+i))_{i \geq 0}$ is ultimately periodic.
\end{conjecture}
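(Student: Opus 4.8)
The plan is to read the Sprague--Grundy table along the direction $(1,1)$ and compute the diagonal values by a finite automaton, in the same spirit as the proof of Theorem~\ref{per.row} and Landman's argument for {\sc Wythoff}, but advancing by $(a,b)\mapsto(a+1,b+1)$ rather than by $b\mapsto b+1$. Fix the gap $d=a_2-a_1\ge 0$; every term of the conjectured sequence is $\G(a,a+d)$ for $a=a_1,a_1+1,\dots$, and on this diagonal $b-a=d$ is constant. First I would record how $\G(a,a+d)$ arises as a $\mex$. A move from $(a,a+d)$ either lowers the larger token, reaching $(a,k)$ with $0\le k<a+d$, or lowers the smaller token, reaching $(k,a+d)$ with $0\le k<a$. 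By the lower bound of Proposition~\ref{SN.2.bound} (and Lemma~\ref{L12} when $k=0$), every position of the second type has value at least $(a+d)-k>d$, so all values $\le d$ occurring among the options come from moving the larger token alone. Using Proposition~\ref{agb} (for fixed $a$ each value is attained at a unique second coordinate, within distance $g$ of $a$) one sees that $0,1,\dots,d-1$ are always present, recovering $\G(a,a+d)\ge d$, and that $\G(a,a+d)$ is governed by the ``profile'' of the row near its diagonal entry together with a set of larger cross-values supplied by the smaller-token moves.

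The second step, and the crux, is to bound the diagonal values by a quantity depending only on $d$. Ultimate periodicity forces the sequence to take finitely many values, so boundedness is a prerequisite, and it is \emph{not} supplied by Proposition~\ref{SN.2.bound}, whose upper bound $b+a-1$ grows along the diagonal. I would try to deduce boundedness by combining Proposition~\ref{agb} with Theorem~\ref{g-sequence-period}: a value $g$ occurs infinitely often on the diagonal $d$ exactly when $d$ lies in the eventual period of the difference sequence $(b^g_n-a^g_n)_{n}$ of the $g$-sequence $S^g$, and the target would be to show that only finitely many $g$ can have $d$ among their eventual differences. The explicit description in Proposition~\ref{G0-5}, where each $G_g$ with $g\le 5$ is a finite set together with finitely many families of constant gap, is precisely the pattern one hopes persists; establishing it for all $g$ would give boundedness together with a clean periodic structure.

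Granting boundedness, say $\G(a,a+d)\le V(d)$, the third step is to encode the relevant part of the table as a finite-state machine advancing by $a\mapsto a+1$, the state being a bounded window of Sprague--Grundy values in a band straddling the diagonal. The delicate point is the smaller-token move $(a,a+d)\to(k,a+d)$, which a priori references the whole column $k<a$; I would control it by the localization estimate that, the diagonal value being at most $V(d)$, only cross-positions of value $\le V(d)$ can affect the $\mex$, and by the column analogue of Proposition~\ref{agb} (via the symmetry $\G(a,b)=\G(b,a)$) there are boundedly many of these near the diagonal. One then verifies that the window at $a+1$ is determined by the window at $a$, exactly as the bit-strings $\S_{L^c(a,b)}$ and $\S_{D^c(a,b)}$ were shifted in the proof of Theorem~\ref{per.row}, except that here the update couples one row step and one column step. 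With a bounded state and a deterministic local transition, the machine must eventually repeat, so $(\G(a_1+i,a_2+i))_{i\ge 0}$ is eventually periodic.

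The step I expect to resist most is the boundedness in the second paragraph. Unlike the row case, fixing the diagonal fixes \emph{neither} coordinate, so the natural window of predecessors grows with $a$, and I have no a priori reason that the competition between the ``row'' options and the ``column'' options at a fixed gap cannot push the $\mex$ arbitrarily high. A clean proof probably requires a structural theorem asserting that each $G_g$ is ultimately a finite union of constant-gap arithmetic families, from which boundedness and periodicity of every diagonal would follow simultaneously; this is essentially a two-dimensional strengthening of Theorem~\ref{g-sequence-period}, and is where I would concentrate the effort.
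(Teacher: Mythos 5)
The statement you are trying to prove is stated in the paper as Conjecture~\ref{per.dia}; the paper offers no proof of it, only numerical evidence (the two sample diagonals with periods $4$ and $144$), so there is no argument of the authors' to compare yours against. Judged on its own terms, your proposal is a sensible plan but not a proof, and you have correctly located the genuine gap yourself: the boundedness of $\G(a,a+d)$ as $a\to\infty$ for fixed $d=a_2-a_1$. Everything downstream of that step (the finite window straddling the diagonal, the shift-update of the window in the spirit of the bit-strings $\S_{L^c}$ and $\S_{D^c}$ in Theorem~\ref{per.row}) is plausible \emph{conditional} on boundedness, but the row argument works precisely because fixing $a$ traps $\H(a,b)=\G(a,b)-b+a$ in the finite range $[0,2a-1]$ via Proposition~\ref{SN.2.bound}; on a diagonal both coordinates grow and the same proposition only gives $d\le\G(a,a+d)\le 2a+d-1$, which is useless for a finite-state encoding. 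Nothing in the paper supplies the missing bound.

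Your suggested route to boundedness via Theorem~\ref{g-sequence-period} also has a hole worth flagging. Knowing that $(b^g_n-a^g_n)_n$ is ultimately periodic tells you, for each fixed $g$, which differences $d$ recur infinitely often in $S^g$; but to bound the diagonal you must rule out that infinitely many \emph{distinct} values $g$ each place at least one (possibly non-recurrent) position on the diagonal of gap $d$. A single sporadic occurrence of gap $d$ in $S^g$ for arbitrarily large $g$ would already make the diagonal sequence unbounded, and Theorem~\ref{g-sequence-period} is a statement about one $g$ at a time with no uniformity across $g$ (indeed the pre-periods and periods produced by the finite-state argument there depend on $g$ in an uncontrolled way). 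So the reduction you propose still requires a genuinely two-dimensional structural result --- your own closing suggestion that each $G_g$ is ultimately a finite union of constant-gap families, uniformly enough in $g$ --- and that is exactly the content of the open conjecture rather than a lemma available from the paper. In short: the approach is reasonable and correctly diagnosed, but the statement remains unproven by your argument, consistent with its status as a conjecture in the paper.
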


As examples, the sequence $(\G(2+i,4+i))_{i \geq 0}$ appears to be ultimately periodic with pre-period length $n_0 = 8$ and
period length $p = 4$. Its first 100 values, where the first period is bold, are

\indent
5, 6, 9, 9, 3, 3, 5, 5, {\bf 3, 3, 4, 4}, 3, 3, 4, 4, 3, 3, 4, 4, 3, 3, 4, 4, 3, 3, 4, 4, 3, 3, 4, 4, 3, 3, 4, 4, 3, 3, 4, 4,
3, 3, 4, 4, 3, 3, 4, 4, 3, 3, 4, 4, 3, 3, 4, 4, 3, 3, 4, 4, 3, 3, 4, 4, 3, 3, 4, 4, 3, 3, 4, 4, 3, 3, 4, 4, 3, 3, 4, 4, 3, 3,
4, 4, 3, 3, 4, 4, 3, 3, 4, 4, 3, 3, 4, 4, 3, 3, 4, 4

\indent
The sequence $(\G(2+i,5+i))_{i \geq 0}$ appears to be ultimately periodic with pre-period length $n_0 = 28$ and period length
$p = 144$. Its first 400 values are

3, 8, 10, 10, 4, 4, 4, 19, 6, 6, 5, 5, 5, 6, 10, 6, 5, 5, 5, 6, 6, 8, 5, 5, 5, 8, 8, 11, {\bf 5, 5, 5, 8, 6, 6, 5, 5, 5, 6,
8, 6, 5, 5, 5, 6, 6, 9, 5, 5, 5, 8, 9, 8, 5, 5, 5, 9, 6, 6, 5, 5, 5, 6, 8, 6, 5, 5, 5, 6, 6, 8, 5, 5, 5, 9, 8, 8, 5, 5, 5, 8,
6, 6, 5, 5, 5, 6, 8, 6, 5, 5, 5, 6, 6, 8, 5, 5, 5, 8, 8, 10, 5, 5, 5, 8, 6, 6, 5, 5, 5, 6, 8, 6, 5, 5, 5, 6, 6, 10, 5, 5, 5,
8, 10, 8, 5, 5, 5, 10, 6, 6, 5, 5, 5, 6, 8, 6, 5, 5, 5, 6, 6, 8, 5, 5, 5, 9, 8, 8, 5, 5, 5, 8, 6, 6, 5, 5, 5, 6, 8, 6, 5, 5,
5, 6, 6, 8, 5, 5, 5, 8, 8, 10,} 5, 5, 5, 8, 6, 6, 5, 5, 5, 6, 8, 6, 5, 5, 5, 6, 6, 9, 5, 5, 5, 8, 9, 8, 5, 5, 5, 9, 6, 6, 5,
5, 5, 6, 8, 6, 5, 5, 5, 6, 6, 8, 5, 5, 5, 9, 8, 8, 5, 5, 5, 8, 6, 6, 5, 5, 5, 6, 8, 6, 5, 5, 5, 6, 6, 8, 5, 5, 5, 8, 8, 10,
5, 5, 5, 8, 6, 6, 5, 5, 5, 6, 8, 6, 5, 5, 5, 6, 6, 10, 5, 5, 5, 8, 10, 8, 5, 5, 5, 10, 6, 6, 5, 5, 5, 6, 8, 6, 5, 5, 5, 6, 6,
8, 5, 5, 5, 9, 8, 8, 5, 5, 5, 8, 6, 6, 5, 5, 5, 6, 8, 6, 5, 5, 5, 6, 6, 8, 5, 5, 5, 8, 8, 10, 5, 5, 5, 8, 6, 6, 5, 5, 5, 6,
8, 6, 5, 5, 5, 6, 6, 9, 5, 5, 5, 8, 9, 8, 5, 5, 5, 9, 6, 6, 5, 5, 5, 6, 8, 6, 5, 5, 5, 6, 6, 8, 5, 5, 5, 9, 8, 8, 5, 5, 5, 8,
6, 6, 5, 5, 5, 6, 8, 6, 5, 5, 5, 6, 6, 8, 5, 5, 5, 8, 8, 10, 5, 5, 5, 8, 6, 6, 5, 5, 5, 6, 8, 6.

\section{Further questions} \label{Ss.future}
We are interested in the following questions for further study.
\begin{enumerate} \itemsep0em
\item Formulate the Sprague-Grundy function for {\sc 2-Star Nim} and generally {\sc $m$-Star Nim}.
\item More generally, formulate the Sprague-Grundy function for {\sc Star Silver Dollar} with at most two tokens on each
    trip (not all even, not all odd).
\end{enumerate}

Having opened up the study of nearly disjunctive sums obtained by identifying the first cell of each of a number of strips
for {\sc Nim} or {\sc Silver Dollar}, it is natural to ask what happens when more cells are identified.  What if cells $0, 1,
\ldots, k$ are identified?  What can be said about computation of the Sprague-Grundy function, when $k$ is held constant?
What if the cells are identified in a different order -- for example, with two strips, cells 0 and 1 of the first strip are
identified with cells 1 and 0, respectively, of the second strip?

Given impartial games $G_1,\ldots,G_n$, their \textit{nearly disjunctive sum} is played as for the disjunctive sum,
\textit{except} that, if a player moves in $G_i$ so that the resulting position in $G_i$ is a $\P$-position for $G_i$, then
neither player can ever make any move in any $G_j$ ($j\in\{1,\ldots,n\}$) that gives a $\mathcal{P}$-position for $G_j$.


\begin{ack}
We thank Mr Kevin Bicknell at La Trobe University for his support with the diagrams of the paper.
\end{ack}


\bibliographystyle{amsplain}

\appendix

\section{Some Maple code} \label{A}

We include the Maple code for $\mex$, {\sc Nim}-sum, 2-{\sc Star Nim}, and {\sc Star Silver Dollar} of positions $([a], [b],
[c,d,e])$. The first two functions are recalled in the last two.

\subsection{``mex"} \label{A1}
\begin{verbatim}
mex := proc (S)
local i;
if S = {}
then return 0 else
    for i from 0 to max(S)+1 do
        if not i in S
            then
            return i
        end if
    end do
end if
end proc
\end{verbatim}

\subsection{{\sc Nim}-sum of two values up to 127} \label{A2}

\begin{verbatim}
ns := proc (n, m)
local a, b, c, i;
a := convert(n, binary)+convert(m, binary);
b[1] := floor((1/10000000)*a);
b[2] := floor((1/1000000)*a-10*b[1]);
b[3] := floor((1/100000)*a-100*b[1]-10*b[2]);
b[4] := floor((1/10000)*a-1000*b[1]-100*b[2]-10*b[3]);
b[5] := floor((1/1000)*a-10000*b[1]-1000*b[2]-100*b[3]-10*b[4]);
b[6] := floor((1/100)*a-100000*b[1]-10000*b[2]-1000*b[3]
        - 100*b[4]-10*b[5]);
b[7] := floor((1/10)*a-1000000*b[1]-100000*b[2]-10000*b[3]
        -1000*b[4]-100*b[5]-10*b[6]);
b[8] := a-10000000*b[1]-1000000*b[2]-100000*b[3]-10000*b[4]
        -1000*b[5]-100*b[6]-10*b[7];
for i to 8 do
    b[i] := mod(b[i], 2)
end do;
return 128*b[1]+64*b[2]+32*b[3]+16*b[4]+8*b[5]+4*b[6]+2*b[7]+b[8]

end proc
\end{verbatim}

\subsection{2-{\sc Star Nim}} \label{A3}
Texts after \# are comments.

\begin{verbatim}
SN2 := proc (a, b)
local i, j, k, g, S;

#defining values for positions with one token in zero square
for i to b do
    g[0, i] := i-1;
    g[i, 0] := i-1
end do;

#recursive calculation
for i to a do
    for j to b do
    S := {};
        for k from 0 to i-1 do         #move from one strip
            S := `union`(S, {g[k, j]})
        end do;
        for k from 0 to j-1 do         #move from one strip
            S := `union`(S, {g[i, k]})
        end do;
    g[i, j] := mex(S)
    end do
end do;

return g[a, b]

end proc
\end{verbatim}

\subsection{3-{\sc Star Silver Dollar} of position ([a], [b], [c,d,e])} \label{A4}

\begin{verbatim}

S113 := proc (alpha, beta, gamma, h, i)
local l, o, t, u, v, z, g, Omega;

#recursively assign values for positions ([0], [0], [t,u,v])
for t from 0 to gamma do
    for u from t+1 to h do
        for v from u+1 to i do
            if t = 0 then
                g(0, 0, t, u, v) := v-u-1
            else
                g(0, 0, t, u, v) := ns(t-1, v-u-1)
            end if
        end do
    end do
end do;

#recursively assign values for positions ([l], [0], [t,u,v])
for l to alpha do
    for t from 0 to gamma do
        for u from t+1 to h do
            for v from u+1 to i do
                if t = 0 then
                    g(l, 0, t, u, v) := ns(l-1, v-u-1)
                else
                    g(l, 0, t, u, v) := ns(ns(l-1, t-1), v-u-1)
                end if
            end do
        end do
    end do
end do;

#recursively assign values for positions ([0], [o], [t,u,v])
for o to beta do
    for t from 0 to gamma do
        for u from t+1 to h do
            for v from u+1 to i do
                if t = 0 then
                    g(0, o, t, u, v) := ns(o-1, v-u-1)
                else
                    g(0, o, t, u, v) := ns(ns(o-1, t-1), v-u-1)
                end if
            end do
        end do
    end do
end do;

#recursively assign values for positions ([l], [o], [0,u,v])
for l to alpha do
    for o to beta do
        for u to h do
            for v from u+1 to i do
                g(l, o, 0, u, v) := ns(ns(l-1, o-1), v-u-1)
            end do
        end do
    end do
end do;

#recursively calculate values for positions ([l], [o], [t,u,v])
for l to alpha do
    for o to beta do
        for t to gamma do
            for u from t+1 to h do
                for v from u+1 to i do
                    Omega := {};

                    #move from the strip [l]
                    for z from 0 to l-1 do
                        Omega := `union`(Omega, {g(z, o, t, u, v)})
                    end do

                    #move from the strip [o]
                    for z from 0 to o-1 do
                        Omega := `union`(Omega, {g(l, z, t, u, v)})
                    end do;

                    #move token t from the strip [t,u,v]
                    for z from 0 to t-1 do
                        Omega := `union`(Omega, {g(l, o, z, u, v)})
                    end do

                    #move token u from the strip [t,u,v]
                    if 1 < u-t then
                        for z from t+1 to z-1 do
                            Omega := `union`(Omega, {g(l, o, t, z, v)})
                        end do
                    end if

                    #move token v from the strip [t,u,v]
                    if 1 < v-u then
                        for z from u+1 to v-1 do
                            Omega := `union`(Omega, {g(l, o, t, u, z)})
                        end do
                    end if

                    g(l, o, t, u, v) := mex(Omega)
                end do
            end do
        end do
    end do
end do

return g(alpha, beta, gamma, h, i)

end proc;

\end{verbatim}

\end{document}